%% file: standard_modules_v7.tex
\documentclass[10pt]{article}

\usepackage{amsfonts}
\usepackage{amsmath}
\usepackage{amssymb}
\usepackage{amsthm}
\usepackage[alphabetic]{amsrefs}
\usepackage{calc}
\usepackage{graphicx}
\usepackage[hidelinks]{hyperref}
\usepackage[left=1in,top=1in,right=1in,foot=1in]{geometry}
\usepackage{todonotes}
\usepackage{microtype}
\usepackage{tikz-cd}

\input{preamble.tex}

\newcommand{\Ch}{\mathrm{ch}}
\newcommand{\Td}{\mathrm{Td}}
\newcommand{\Forg}{\mathrm{Forg}}

\providecommand{\keywords}[1]{\textbf{\textit{Keywords---}} #1}

\title{Standard modules of affine Hecke algebras}
\date{\today}
\author{Stefan Dawydiak \thanks{School of Mathematics and Statistics, University of Glasgow; email \texttt{stefan.dawydiak@glasgow.ac.uk}}}

\begin{document}
\maketitle
\begin{abstract}
Let $G$ be a connected reductive group defined and split over a non-archimedean local field $F$. We give a new geometric proof of a special case of a recent theorem of Solleveld. Namely, we show that the class of standard Iwahori-spherical $G(F)$-representations, a notion a priori dependent on the coefficient field being the complex numbers, is actually defined over $\bar{\Q}_\ell$. 

An unpublished theorem of Clozel, proven with global techniques, says that the class of essentially square-integrable representations is also defined over $\bar{\Q}_\ell$. As an application of our main result, we give a local proof of this theorem for inner forms of $\GL_n$, as well as showing that standard representations of these groups are defined over $\bar{\Q}_\ell$.
\end{abstract}
\keywords{Standard representation, essentially square-integrable representation, simple type, Iwahori-Hecke algebra, Chow group}
%
%\tableofcontents

%\todo[inline]{Make v3, with remark about future using Kato, etc, for unipotent blocks of classical groups. Aim for Friday or weekend, look at posting schedules.}

%\todo[inline]{Use Kato for type BC. Need:
%%
%
%
%Affine paving of his fixed pints of Spr. Fibres. This should be his Chern character surjectivity statement. Can do inductively with support filtration: surjection should induce surjection of associated gradeds, so $H_0$ therefore spanned by $0$-cycles ie points, then $H_1$ is spanned by classes of supports of coherent sheaves on one-dimensional subvarities, modulo points--which we already have---, and so on.
%
%Rational points in his ``nilpotent orbits" Might have to be creative to apply Kottwitz, or it will be obvious because classical groups.
%%
%%
%Casselman criterion. In Kato.
%
%Induction theorem to convert tempered and discrete to standards
%}

%\todo[inline]{change Baum et. al. references to Fulton, cleaner.}
\section{Introduction}
\subsection{Representations over $\bar{\Q}_\ell$}

Let $G$ be a connected reductive group defined and split over a non-archimedean local field $F$. Let $\pi$ be a complex admissible representation of $G(F)$, let $\gamma$ be a field automorphism of $\C$, and define a representation $\gamma\cdot\pi$ of $G(F)$ by declaring that its matrix coefficients are of the form $\gamma(f(g))$, for matrix coefficients $f$ of $\pi$ (see Section \ref{subsection action of field automorphisms} for a precise definition). It is clear that $\gamma\cdot\pi$ is irreducible when $\pi$ is, and likewise for supercuspidality. In \cite{KSV} and \cite{SolleveldStandard}, less obvious heritability from $\pi$ to $\gamma\cdot\pi$ is studied. 
Recall that the standard representations of $G(F)$ are by definition parabolic inductions $i_P^G(\sigma\otimes\nu)$ for irreducible tempered representations $\sigma$ and positive unramified characters $\nu$ of $M_P(F)$, respectively, where $M_P$ is the Levi subgroup of $P$ (see \cite[Def. 4.1, Thm. 4.3 (a)]{SolleveldStandard}). Thus the standard modules occur in algebraic families.

As pointed out in \cite{SolleveldStandard}, this occurrence in families is one reason to think that the standard representations may be better behaved in the context of the categorical Local Langlands correspondence of \cite{FS} than simple representations. For this to make sense, though, the notion of ``standard representation" should be defined over $\bar{\Q}_\ell$, \textit{i.e.}, be stable under the above action of $\Aut(\C)$. To this end, Solleveld has recently shown
\begin{theorem}[\cite{SolleveldStandard}, Prop. 5.5]
\label{thm standard}
Let $G$ be a connected reductive group over $F$ and $\pi$ a standard representation of $G(F)$. Then $\gamma\cdot\pi$ is also a standard representation of $G(F)$.
\end{theorem}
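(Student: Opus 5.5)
Write $\pi = i_P^G(\sigma\otimes\nu)$ with $\sigma$ irreducible tempered and $\nu$ a positive unramified character of $M_P(F)$. The plan is to show that $\gamma\cdot\pi$ can again be written in this form, though $\gamma\cdot\sigma$ and $\gamma\circ\nu$ themselves need not be tempered or positive. The first, formal step is that the $\Aut(\C)$-action commutes with normalized parabolic induction up to a twist: $\gamma\cdot i_P^G(\tau) \cong i_P^G(\gamma\cdot\tau\otimes\chi_\gamma)$. Here $\chi_\gamma = (\gamma\circ\delta_P^{1/2})\,\delta_P^{-1/2}$ is an unramified character of $M_P(F)$, taking values in $\pm 1$ because $\delta_P$ takes values in $q^{\Z}$. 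Consequently
\[
\gamma\cdot\pi \cong i_P^G\bigl(\gamma\cdot\sigma\otimes(\gamma\circ\nu)\chi_\gamma\bigr),
\]
and everything reduces to understanding $\gamma\cdot\sigma$ and the character $\gamma\circ\nu$.

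Next I would recover a standard-module description of this induced representation. I would use the Langlands classification in its characterization by Jacquet module exponents. Decompose $\gamma\cdot\sigma$ via its cuspidal support: $\sigma$ embeds into $i_{Q}^{M_P}(\rho)$ with $\rho$ supercuspidal, and $\gamma\cdot\rho$ is again supercuspidal. Twist $\gamma\cdot\rho$ by its unramified part so that it becomes unitary, namely $\gamma\cdot\rho = \rho'\otimes\mu$ with $\rho'$ unitary supercuspidal and $\mu$ unramified. The absolute values $|\mu|$ and $|\gamma\circ\nu|$ are no longer controlled, since $\gamma$ can scramble $q^{s}$ for real $s$. I would therefore avoid working with a fixed $\gamma$ and instead use the characterization that a representation is standard if and only if it is induced from some tempered $\tau$ twisted by a character in the closed positive chamber. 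I would look for such a $(P',\tau,\nu')$ by applying Langlands' classification to the irreducible Langlands quotient $\gamma\cdot L(\pi)$, which is irreducible and so is $L(P',\tau,\nu')$ for some data. The claim to prove is then $\gamma\cdot\pi\cong i_{P'}^G(\tau\otimes\nu')$.

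The main obstacle is this last identification. The approach I would try is to compare both sides in the Grothendieck group via Bernstein--Zelevinsky/Kazhdan--Lusztig type triangularity. Standard modules form a basis of the Grothendieck group of each Bernstein block, unitriangular with respect to the irreducibles under a partial order given by the Langlands parameter. The map $\gamma\cdot$ preserves the Grothendieck group of the union of Galois-conjugate blocks. To conclude I would need $\gamma\cdot$ to preserve the relevant partial order, and this is exactly where the positivity of $\nu$ being destroyed by $\gamma$ bites. My first attempt at circumventing it is to reduce to Iwahori-spherical (or, via Bushnell--Kutzko types and Hecke algebra isomorphisms, to affine Hecke algebra) modules. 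There I would use the Kazhdan--Lusztig geometric realization: standard modules are $K$-homology (equivalently $\ell$-adic homology) of Springer-type fibres $\mathcal{B}^{s,u}$ with a component group character. This description is visibly defined over $\bar\Q_\ell$ once one checks that the relevant Chow groups and Borel--Moore homology agree, i.e. that the fibres are paved by affines. Then $\gamma$ sends the geometric standard module attached to $(s,u,\rho)$ to the one attached to $(\gamma(s),u,\gamma\rho)$, which is again standard. The remaining technical check is that this geometric class coincides with the analytic class (tempered $\otimes$ positive) of the statement. I would deduce this from Kazhdan--Lusztig's identification of tempered modules with $s$ having compact part in the relevant sense.
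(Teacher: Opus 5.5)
\textbf{There is a genuine gap.} Your final strategy is the paper's: realize standard modules on Chow groups of $\Bb_N^s$, then transport them along the scheme isomorphism $\Spec(\gamma)\colon\Bb_N^{\gamma(s)}\to\Bb_N^s$ to obtain $K(\gamma(s),N,\rho_\gamma)$. But as a proof of the theorem as stated, for arbitrary connected reductive $G$, it breaks at the reduction step.

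The Kazhdan--Lusztig/Chriss--Ginzburg realization on the Steinberg variety exists only for affine Hecke algebras with equal parameters. For non-split $G$ the Iwahori--Hecke algebra typically already has unequal parameters. For general Bernstein blocks, the Hecke algebras of types (when types are known at all) are affine Hecke algebras with unequal parameters, possibly twisted by finite groups. Equal-parameter algebras covering every block are available essentially only for inner forms of $\GL_n$ (Bushnell--Kutzko, S\'echerre--Stevens). Your first route through the Langlands classification stalls exactly where you say it does, since temperedness, positivity and the ordering are not $\gamma$-stable. So nothing in your proposal handles the remaining blocks. This is why the paper only reproves the statement for the Iwahori block of split groups and for inner forms of $\GL_n$, and relies on Solleveld for the general case. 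Your claim should be restricted in the same way.

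Within those cases, the steps you call ``visible'' are the actual content of the proof.
\begin{itemize}
\item Since $\gamma$ is not continuous, it induces no map on Borel--Moore homology. You therefore need the theorem of De Concini--Lusztig--Procesi that the cycle map $A_*(\Bb_N^s)\to H_*(\Bb_N^s;\Z)$ is an isomorphism. It is proved via $\alpha$-partitions; an affine paving is not needed and is not known in this generality.
\item You need $N$ to be a $\Q$-point of its orbit (Kottwitz), so that $\gamma$ fixes $N$ and $(\Bb_N^s)_\gamma=\Bb_N^{\gamma(s)}$.
\item You must check that the whole $H$-action commutes with $\Spec(\gamma)^*$. The generators $\Oo_\lambda$ and $\Ll_\sigma$ are defined over $\Q$ as equivariant sheaves. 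Restriction to fixed points, the localization class $\lambda_s^{-1}$, bivariant Riemann--Roch and convolution are all compatible with flat pullback along an isomorphism of schemes. The dependence on $s$ through the scalars $\lambda(\varphi_{B_j}(s))$ and $\alpha(s)$ is precisely what forces the $\C_\gamma$-twist and produces $\gamma(s)$.
\item Outside the Iwahori block, you need that $(K,\gamma\cdot\rho)$ is again a S\'echerre--Stevens type, for the additive character $\gamma\circ\psi$. This makes the $\gamma$-semilinear isomorphism $H(G,K,\rho)\to H(G,K,\gamma\cdot\rho)$ compatible with the identifications of both sides with $\bigotimes_i H_{e_i}(q^{f_i})$.
\item You also need Solleveld's theorem that these type equivalences carry standard representations to standard modules. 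This is not automatic: the root datum of $H(G,K,\rho)$ differs from that of $G$, so \emph{tempered twisted by positive} on the two sides must be compared.
\end{itemize}
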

The first result of this note is a new geometric proof, for representations in the principal block of split groups, of Theorem \ref{thm standard}.
Precisely, we give a geometric proof, as our Corollary \ref{Cor Hecke twist standard and formal degrees}, that if $K(s,N,\rho)$ is a standard module for the Iwahori-Hecke algebra $H$ of $G$, as constructed in terms of $G^\vee$ and the enhanced $L$-parameter $(s,N,\rho)$ in 
\cite{KLDeligneLanglands}, \cite{CG}, then $\gamma\cdot K(s,N,\rho)\simeq K(\gamma(s),N,\rho_\gamma)$, as one would expect upon considering central characters. Here, $\rho_\gamma$ is the pullback of $\rho$ along the isomorphism $\pi_0(Z_{G^\vee}(s,N))\simeq \pi_0(Z_{G^\vee}(\gamma(s),N))$ induced by $\gamma$, see \eqref{component group isomorphism} below. This implies that the Deligne-Langlands correspondence of \cite{KLDeligneLanglands} is defined over $\bar{\Q}_\ell$, see Corollary \ref{Cor DL}.

Note that if $\pi$ is generated by its Iwahori-fixed vectors for a chosen Iwahori subgroup $I$ of $G(F)$, then so is $\gamma\cdot\pi$. In general, the functor $\C_\gamma\otimes_\C-$ does not preserve Bernstein blocks (consider $G=\GL_1$), but still interacts well enough with types and Hecke algebras for our purposes, see Lemma \ref{lem types}. 

The main input to our argument is the main result of \cite{DLP}, which with \cite{KLDeligneLanglands}, \cite{CG} implies that, up the central character, representations $K(s,N,\rho)$ can be constructed on rationalized Chow groups of certain varieties defined over $\Q$. This may be viewed as further suggestion that  (at least for the principal block or $G=\GL_n$) the standard modules admit a natural interpretation in terms of \cite{FS}. 

After to appealing to \cite{DLP}, the argument is very formal, amounting essentially to various intersection-theoretic constructions commuting with isomorphisms of schemes, \textit{i.e.}, being well-defined. Nevertheless, we have tried to include careful references for these constructions beyond the usual references for singular homology, because it is conceptually important that we may work morphisms of schemes that are not morphisms of $\C$-schemes.
%Therefore pullback of cycles gives a natural linear map 
%%
%\[
%\C_\gamma\otimes_\C K(s,N,\rho)\to K(\gamma(s),N,\rho_\gamma).
%\]
%%

Recall that a Langlands parameter 
\[
W_F\times\SL_2(\C)\to G^\vee(\C)
\]
is called essentially discrete if it does not factor through any proper Levi subgroup of $G^\vee$. Clearly, this property is stable under post-composition with field automorphisms of $\C$. The corresponding property for representations is of course not obvious. Our Corollary \ref{Cor Hecke twist standard and formal degrees} also shows that when $K(s,N,\rho)$ is essentially square-integrable, then so is $\gamma\cdot K(s,N,\rho)$, with the same formal degree. 

Combining Corollary \ref{Cor Hecke twist standard and formal degrees} with the results of \cite{BushKutzBook}, \cite{BushKutzComp}, \cite{Secherre}, \cite{SecherreStevens}, \cite{SecherreStevensIV}, we obtain the second result of this note: We give a local proof, valid for function fields, for inner forms $G$ of $\GL_n$, of
\begin{theorem}[Clozel, unpublished]
\label{thm Clozel}
Let $G$ be a connected reductive group over a $p$-adic field $F$, and let $\pi$ be an essentially square-integrable representation of $G$. Then $\gamma\cdot\pi$ is again essentially square-integrable, with the same formal degree as $\pi$.
\end{theorem}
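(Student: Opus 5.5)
The statement concerns an arbitrary connected reductive $G$. The plan is to characterise essential square-integrability by conditions that are visibly algebraic, and hence preserved by the functor $\C_\gamma\otimes_\C-$, so that no analytic estimate on matrix coefficients is needed. Casselman's criterion cannot be used directly. It is phrased through absolute values of the exponents of Jacquet modules, and $\gamma$ does not preserve absolute values. So I would separate the problem into a ``temperedness'' part and an ``ellipticity'' part, and treat each algebraically.

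\emph{Step 1 (reduction to a fixed central character).} Twisting by an unramified character of $G(F)$, I may assume the central character of $\pi$ restricted to the maximal split central torus is unitary of finite order. This property is clearly stable under $\gamma$. The problem then becomes square-integrability modulo the centre, and everything below takes place in the category of smooth representations with that fixed central character.

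\emph{Step 2 (temperedness via standard modules).} By Theorem~\ref{thm standard}, $\gamma$ permutes standard representations. The Langlands quotient map $i_P^G(\sigma\otimes\nu)\mapsto$ (unique irreducible quotient) is compatible with $\gamma$, because ``unique irreducible quotient'' is an algebraic notion. Hence $\gamma\cdot\pi$ is the Langlands quotient of $\gamma\cdot i_P^G(\sigma\otimes\nu)$, which is again standard. Tempered irreducibles are exactly the standard modules with $P=G$, equivalently with $\nu$ trivial. The key point here is that the data $(P,\gamma\cdot\sigma,\nu)$ attached to $\gamma\cdot\pi$ have the same parabolic $P$: the Levi $M_P$ is recovered from the standard module intrinsically, for instance as the Levi in the Langlands data. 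So $P=G$ is preserved and $\gamma\cdot\pi$ is tempered whenever $\pi$ is.

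\emph{Step 3 (discrete among tempered).} For tempered irreducible $\pi$ with fixed central character, I would use the Schneider--Stuhler/Bezrukavnikov Euler--Poincar\'e pairing. This is the alternating sum of dimensions of $\mathrm{Ext}^i(\pi,\pi)$ in the category of Step 1, and it is defined purely algebraically. Since $\C_\gamma\otimes_\C-$ is an exact autoequivalence of that category (it is semilinear, but preserves projectives and hence Ext dimensions), it preserves $\mathrm{EP}(\pi,\pi)$. By Kazhdan's orthogonality and Schneider--Stuhler, a tempered irreducible $\pi$ is square-integrable modulo the centre if and only if $\mathrm{Ext}^i(\pi,\pi)=0$ for all $i>0$ and $\mathrm{EP}(\pi,\pi)\ne 0$. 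Meyer and Opdam--Solleveld show that for a tempered non-discrete $\pi$, $\mathrm{Ext}^*(\pi,\pi)$ contains an exterior algebra on the split rank of the relevant Levi, which is nonzero in positive degree. Combined with Step 2, this shows that $\gamma\cdot\pi$ is essentially square-integrable.

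\emph{Step 4 (formal degree).} The formal degree of a discrete series $\pi$ equals $\mathrm{EP}$-type data paired with a pseudo-coefficient. Concretely, $d(\pi)$ is the value at $1$ of the character of $\pi$ integrated against the Euler--Poincar\'e function. Equivalently, $d(\pi)^{-1}\,\mathrm{vol}$ is the trace of the idempotent cutting out $\pi$ in $\mathcal H(G)$ for a type $(K,\lambda)$ containing $\pi$, computed as $\dim\lambda/\mathrm{vol}(K)$ times the formal degree in the Hecke algebra of the type. I would show this last quantity is rational and $\gamma$-invariant, since the Plancherel trace on the Hecke algebra is the $\Q$-rational trace $T_w\mapsto\delta_{w,1}$.

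The \textbf{main obstacle} is Step 3, specifically the algebraic characterisation of discrete series among tempered representations for general $G$. I would first try the Ext-vanishing criterion, and check whether it really relies only on Bernstein's decomposition and the Schneider--Stuhler resolution. If that fails, the fallback is to pass via types (Kim--Yu, Solleveld's reduction) to affine Hecke algebras with unequal parameters. There I would use Opdam's criterion that discrete series have residual central characters, which is an algebraic pole-order condition, together with his $\Q(q)$-rational formula for formal degrees.
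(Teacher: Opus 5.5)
\textbf{There is a genuine gap, and it is in Step 2.} Step 1 is fine, but Step 2 quietly assumes the very thing you said at the outset cannot be assumed: that temperedness is insensitive to $\gamma$.

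Theorem \ref{thm standard} says only that $\gamma\cdot i_P^G(\sigma\otimes\nu)$ is standard for \emph{some} Langlands data $(P',\sigma',\nu')$. Up to an unramified twist of order at most two (coming from $\gamma(q^{1/2})=\pm q^{1/2}$), this module is $i_P^G(\gamma\cdot\sigma\otimes\gamma\circ\nu)$. However, $\gamma\cdot\sigma$ need not be tempered and $\gamma\circ\nu$ need not be positive, so in general $P'\neq P$. The Levi of the Langlands data is indeed intrinsic to a module, but it is the Levi of $\gamma\cdot\pi$'s own data, which you do not control.

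Here is a concrete counterexample. In $\GL_2(F)$, take an unramified $\chi$ with $\chi(\varpi)=z$ transcendental and $|z|=1$, and let $\pi=i_B^G(\chi\otimes\chi^{-1})$. This $\pi$ is irreducible and tempered with trivial central character, so Step 1 does not exclude it. For any $\gamma$ with $|\gamma(z)|\neq1$, $\gamma\cdot\pi$ is an irreducible principal series with non-unitary exponents, i.e.\ a standard module with $P'=B$, and it is not tempered. The paper itself remarks that tempered representations are not $\Aut(\C)$-stable.

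So you need an input that uses \emph{discreteness} of $\pi$, not just temperedness, to force $\gamma\cdot\pi$ to be tempered; this is the real content of the theorem. The paper obtains it on the parameter side:
\begin{itemize}
\item By the Kazhdan--Lusztig criterion, $K(s,N,\rho)$ is essentially square-integrable if and only if $(s,\exp N)$ lies in no proper Levi subgroup. Once $N$ is chosen $\Q$-rational, this condition is visibly preserved by $s\mapsto\gamma(s)$.
\item The Chow-group realisation via \cite{DLP} identifies the parameter of the twist: $\gamma\cdot K(s,N,\rho)\simeq K(\gamma(s),N,\rho_\gamma)$ (Theorem \ref{thm Hecke twist}).
\item For inner forms of $\GL_n$, this is transported through types (Lemma \ref{lem types}, Theorem \ref{thm SolleveldCompletion}).
\end{itemize}
The paper proves the statement only in these cases (the principal block of split groups and inner forms of $\GL_n$), not for general $G$.

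\textbf{Step 3 cannot compensate.} Your Ext criterion separates discrete from non-discrete only \emph{among tempered} representations. For example, the trivial representation of $\mathrm{PGL}_2(F)$ has $\mathrm{Ext}^i(\triv,\triv)=0$ for $i>0$ and Euler--Poincar\'e pairing $1$, exactly like its Aubert dual, the Steinberg representation, yet it is not discrete. Your fallback has the same defect: a residual central character is necessary but not sufficient for discreteness, since the trivial and Steinberg modules of the Iwahori--Hecke algebra share their central character. To stay homological you would need a genuinely new statement to replace Step 2. One candidate is that every irreducible standard module induced from a proper parabolic has nonzero self-$\mathrm{Ext}^1$, which could then be played off against the vanishing of $\mathrm{Ext}^{>0}(\pi,\pi)$ for discrete $\pi$ (Meyer). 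Nothing in the proposal supplies this.

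\textbf{Step 4 is also incomplete.} The idempotent cutting out $\pi$ lives in the Harish-Chandra Schwartz algebra, not in $\mathcal{H}(G)$ or in the Hecke algebra of a type. The formal degree is extracted from the $\Q$-rational trace $T_w\mapsto\delta_{w,1}$ through the Plancherel decomposition over the tempered dual, and $\gamma$ does not respect that decomposition. So rationality of the trace alone does not give $d(\gamma\cdot\pi)=d(\pi)$. You need an explicit algebraic formula in the parameter whose $\gamma$-equivariance can be checked, together with rationality of $d$. The paper uses Reeder's formula, which needs the compact part $s_1$ of $s$ to have root-of-unity eigenvalues so that $\gamma(s_1)$ stays compact, combined with \cite{FOS}.
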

In fact, using \cite{SolleveldCompletion}, we treat general standard modules, giving a proof of Theorem \ref{thm standard} for inner forms of $\GL_n$, compatible with \ref{thm Clozel}.

An exposition of Clozel's proof is given in \cite{KSV}, where the corresponding result for function fields is then deduced from Theorem \ref{thm Clozel} by Kazhdan's method of close fields.

Finally, note that irreducible tempered and discrete series representations are standard, but that within the class of standard representations, the tempered representations are not stable under $\Aut(\C)$. Thus the standard representations are a class of representations defined over $\bar{\Q}_\ell$ containing the tempered representations. Our proof of Theorem \ref{thm standard} will show that the standard representations form essentially the minimal  enlargement of the tempered representations, at least in the case of the principal block and inner forms of $\GL_n$.

In \cite{BravermanKazhdan}, Braverman-Kazhdan introduce a certain subalgebra 
$\mathcal{J}$ of the Harish-Chandra Schwartz algebra of $G(F)$. By \cite{suzuki}, Theorem \ref{thm standard} implies that the class of simple $\mathcal{J}$-modules is defined over $\bar{\Q}_\ell$. Our proof in this note provides an independent argument for the principal block, and for inner forms of $\GL_n$. The idempotented piece $\mathcal{J}_I$ of $\mathcal{J}$ corresponding to the principal block is isomorphic to Lusztig's asymptotic Hecke algebra $J$ \cite{BravermanKazhdan} (see the proofs in \cite{Plancherel}, \cite{rigid}, \cite{BKK}), and \cite{Plancherel} then implies that $\mathcal{J}_I$ has a $\C$-basis of $\Q$-valued functions.

\subsection{Prior work for discrete series of inner forms of $\GL_n$}
Theorem \ref{thm Clozel} can be deduced from a local Langlands correspondence for $G$ satisfying the properties of \cite[Conj. 6.1]{KalethaTaibi}, see Proposition 6.3 of \textit{op. cit.}. Of course, this is an extreme approach: all the known proofs \cite{LRS}, \cite{Siyan} (for function fields), \cite{Henniart}, \cite{HT}, \cite{Scholze} (for $p$-adic fields) of the local Langlands correspondence for $\GL_n$ and its inner forms use far more global input than does Clozel's proof.

However, there are also prior local proofs of Theorem \ref{thm Clozel} for $\GL_n$ and its inner forms. After finishing a first version of this paper, we happened upon \cite[\S 2.1.1 (ii)]{Dat}, where it is explained that for $\GL_n(F)$, Theorem \ref{thm Clozel} is known to follow the Bernstein-Zelevinski classification, specifically \cite[Thm. 9.3]{ZII}, or alternatively from \cite{Rogawski} for $p$-adic fields (which again uses global techniques). Presumably the case of $\GL_n(D)$ then follows from the (purely local) proof of a Bernstein-Zelevinski-style classification for these groups given by M\'{i}nguez-S\'{e}cherre in \cite{MSBanal}. As pointed out in 
\cite[Rem. 10.1]{MSJLmodulol}, $\Aut(\C)$-equivariance of the Jacquet-Langlands correspondence follows from its character-theoretic characterization.

As \cite{BushKutzBook}, \cite{BushKutzComp} and \cite{Secherre}, \cite{SecherreStevens} are independent of \cite{ZII} and \cite{MSBanal}, respectively, our proof is independent of the above-recalled local proofs.
The question of standard representations is not taken up in the above works.

\subsection{Other groups}
In principle, our approach to Theorem \ref{thm Clozel} based on affine Hecke algebras from types should adapt more readily to general groups than approaches based on the Bernstein-Zelevinski classification. However, the techniques of the present note work only for equal-parameter affine Hecke algebras. While such algebras can appear for types beyond the principal block or inner forms of $\GL_n$, it seems that the only groups that can be dealt with via types using equal-parameter algebras exclusively are again inner forms of $\GL_n$. Using more general Morita equivalences, Aubert-Baum-Plymen-Solleveld showed that all Bernstein blocks of inner forms of $\SL_n$ may be treated by more complicated algebras built out of equal-parameter affine Hecke algebras \cite{ABPS1}, but Theorem \ref{thm SolleveldCompletion} connecting preservation of standardness to our geometric statement does not seem to be available for these algebras, see the discussion in \cite[p.43]{SolleveldCompletion}.

In future work, we hope to expand our techniques to unipotent blocks of classical groups, using \cite{Kato} and \cite{CiubotaruKato}.

%\todo[inline]{what about $G_2$? or $\SL_2$ Aubert-Xu or Suzuki-Xu Sp(4)?}

\subsection{Acknowledgements}
The author thanks Anne-Marie Aubert, Dinakar Muthiah, Evgeny Shinder, Maarten Solleveld, Eric Sommers, and Yakov Varshavsky for helpful conversations, and Maximilien Mackie for pointing out an imprecision in Remark \ref{rem EM} in an earlier version of this note.
Some of this work was completed during ``Geometric representation theory in the Langlands program" at CIRM Luminy; the author thanks the organizers and staff for their hospitality. 
The author was supported by Engineering and Physical Sciences Research Council grant UKRI167 ``Geometry of double loop groups." 

\section{Standard modules}
\label{section standard modules}
\subsection{Notation and conventions}
Let $F$ be a local non-archimedean field and let $G^\vee$ be a connected reductive group defined and split over $F$ (we adopt this convention on which group is dual, as the $p$-adic group will appear little in the sequel.) Let $q$ be the cardinality of the residue field of $F$.

Let $G$ be the group dual to $G^\vee$, considered as a group over $\C$, with root datum $(X^*,\Phi,X_*,\Phi^\vee)$, and chosen set $\Sigma$ of simple roots. We write $\Bb$ for the flag variety of $G$, $\Nn=T^*\Bb$, and $\St=\Nn\times_\mathcal{N}\Nn$ for the Steinberg variety of $G$, where $\mathcal{N}\subset\g=\Lie(G)$ is the nilpotent cone. All of these depend only on $G_{\mathrm{ad}}=G/Z(G)$, a semisimple group of adjoint type. Therefore they are each defined over $\Q$, by Chevalley's theorem (in fact, over $\Z$ \cite{LusJAMS2009}). Given a nilpotent element $N\in\g$, we write $\Bb_N$ for the Springer fibre, and given $g\in G$, we write $X^g$ for the fixed points for any $G$-scheme $X$. We will view the above $G$-schemes as $G\times\Gm$-schemes, such that $\Gm$ scales the cotangent fibres in $\Nn$.

We write $K(X)=K_0(\Coh(X))$ for the Grothendieck group, for any stack $X$.

Let $W$ be the Weyl group of $G$.
Let $H$ be the affine Hecke algebra corresponding to $\tilde{W}=W\ltimes X^*$ (recall our conventions on duals) over the formal Laurent polynomial ring 
$\Z[\bq^{1/2},\bq^{-1/2}]$. We view $\Z[\bq^{1/2},\bq^{-1/2}]=K_0(\pt/\Gm)$ and identify 
\[
H\simeq K(\St/G\times\Gm)
\]
as in \cite[\S 7.6]{CG}. When $\bq=q$ is the size of the residue field, it is well-known that $H|_{\bq=q}=C_c(I\rquotient G^\vee(F)/I, \Z)$ is the algebra of $I$-biinvariant compactly supported functions on $G^\vee(F)$.

\subsection{Action of field automorphisms}
\label{subsection action of field automorphisms}
Now we fix once and for all a field automorphism $\gamma\in\Aut(\C)$. 
Following \cite[\S 2]{KSV}, \cite[\S5]{SolleveldStandard}, let $\C_\gamma$ denote the $\C\text{-}\C$-bimodule $\C$ with actions 
\[
z_1\cdot z\cdot z_2=z_1z\gamma(z_2).
\]
Let $\pi$ be an admissible $G^\vee(F)$-representation, and let $\gamma\cdot\pi=\C_\gamma\otimes_\C\pi$ be the twist of $\pi$ by $\gamma$. It has scalar multiplication given by 
\[
z\cdot(1\otimes v)=z\otimes v=1\otimes\gamma(z)^{-1}v
\]
and $G^\vee(F)$-action given by 
\[
(\gamma\cdot\pi) g\cdot(1\otimes v)=1\otimes \pi(g)v. 
\]
The matrix coefficients of $\gamma\cdot\pi$ are obtained from the matrix coefficients of $\pi$ by applying $\gamma$. Indeed, clearly we have $\widetilde{\gamma\cdot\pi}=\gamma\cdot\widetilde{\pi}$, with pairing
\[
\pair{z_1\otimes\tilde{v}}{z\otimes v}_{\gamma\cdot\pi}=z_1z\gamma(\pair{\tilde{v}}{v}_{\pi}).
\]
This means that for $1\otimes v=1\otimes v_{\alpha_0}\in\gamma\cdot\pi$ for $\sett{v_\alpha}_\alpha$ some basis of $\pi$, and $1\otimes\tilde{v}\in\gamma\cdot\tilde{\pi}$, such that $\pair{\tilde{v}}{v}_\pi=1$, then writing 
\[
(\gamma\cdot\pi)(g)(1\otimes v)=1\otimes \pi(g)v=1\otimes\left(\sum_{\alpha}c_\alpha v_\alpha\right)=\sum_{\alpha}\gamma(c_\alpha)\otimes v_\alpha,
\]
we have
\[
\pair{1\otimes\tilde{v}}{(\gamma\cdot\pi)(g)(1\otimes v)}_{\gamma\cdot\pi}=
\pair{1\otimes\tilde{v}}{1\otimes\pi(g)v}_{\gamma\cdot\pi}=\gamma(c_\alpha)=\gamma\left(\pair{\tilde{v}}{\pi(g)v}_{\pi}\right).
\]

Given $f\in \mathcal{H}=C_c^\infty(G^\vee(F))$, the matrix coefficients of the operator $(\gamma\cdot\pi)(f)$ are then by definition the numbers
\begin{equation}
\label{HeckeMatCoeff}
\pair{1\otimes \tilde{v}}{(\gamma\cdot\pi)(f)(1\otimes v)}_{\gamma\cdot\pi}=
\dInt{f(g)\pair{1\otimes\tilde{v}}{(\gamma\cdot\pi)(g)(1\otimes v)}_{\gamma\cdot\pi}}{g}=
\dInt{f(g)\gamma\left(\pair{\tilde{v}}{\pi(g)v}\right)}{g}.
\end{equation}
As we normalize the Haar measure such that open compact subgroups of $G$ have rational volumes, if $f\in C_c^\infty(G,\Q)$ is rational-valued, then \eqref{HeckeMatCoeff} is equal to 
\[
\gamma\left(\pair{\tilde{v}}{\pi(f)v}_\pi\right),~~~f\in C_c^\infty(G^\vee(F),\Q).
\]
%

%Now let $(K,\rho)$ be a type such that $H(G,K,\rho)$ is isomorphic via $\Upsilon$ to a subalgebra of $\mathcal{H}$, which is isomorphic to an affine Hecke algebra $H$ with equal parameters. 
%%
%\begin{lem}
%Suppose that $\rho$ is realized over $\Q$, \textit{i.e.} $\rho$ factors via the inclusion $\GL_n(\Q)\into \GL_n(\C)$ for $n=\dim\rho$. Then $H(G,K,\rho)$ is defined over $\Q$.
%\end{lem}
%%
%\begin{proof}
%Realisability over $\Q$ of $\rho$ implies that it is well-defined to set
%%
%\[
%H(G,K,\rho)_\Q=\sets{f\colon G\to\End(\rho)(\Q)}{f\in H(G,K,\rho)}.
%\]
%%
%Clearly $H(G,K,\rho)=H(G,K,\rho)\otimes_\Q\C$.
%\end{proof}
%%
%Rescaling priemage in $H(G,K,\rho)$ of the basis $\sett{T_w}_{w\in\tilde{W}}$ of $H$ under the assumed isomorphism yields an isomorphism
%%
%\[
%H(G,K,\rho)_\Q\overset{\sim}{\to}H_\Q
%\]
%%
%where $H_\Q=\spn_\Q{\{T_w\}}_{w\in\tilde{W}}$ for $\tilde{W}$ the affine Weyl group corresponding to $H$. Let $G_H$ be the corresponding dual group over $\C$ for $\tilde{W}$, \textit{i.e.} the group appearing in the parametrization of simple (and standard) $H$-modules. 

\subsection{Iwahori-spherical standard modules as vector spaces}
Recall that the standard modules are indexed by triples $(s,N,\rho)$ up to $G(\C)$-conjugacy, where $s\in G(\C)$ is semisimple, $N\in\g$ is nilpotent, $\Ad(s)N=qN$, and $\rho$ is an irreducible representation of the component group $\pi_0(Z_{G}(N,s))$ appearing in $H_*(\Bb_N^{(s,q)})$, where we view $\Bb_N$ as a $G\times\Gm$-variety as above. Then the corresponding standard module is by definition, $K(s,N,\rho)=H_*(\Bb_N^s)_\rho$, the $\rho$-isotypic part of $H_*(\Bb_N^s)$. This is shown in \cite{KLDeligneLanglands} \cite{CG} under the assumption that the derived subgroup of $G$ is simply-connected, and extended to the general case in \cite[Thm. 3.5.4]{ReederIsogeny}.

The adjoint representation depends by definition only on the adjoint form $G_{\mathrm{ad}}$ of $G$, which, by Chevalley's theorem, is certainly defined over $\Q$, as is $\g$ (see, \textit{e.g.}, \cite{LusJAMS2009}). Therefore the adjoint representation of $G_{\mathrm{ad}}$ on $\g$ is defined over $\Q$ \cite[\S 34.2 (9.1), (10.3)]{HumphreysLAG}, and hence each nilpotent orbit of $G$ is defined over $\Q$ \cite[\S 34.2 (8.3)]{HumphreysLAG}. By the proof of Theorem 4.2 of \cite{Kottwitz}, each nilpotent orbit in $\g$ therefore has a 
$\Q$-point, so we may assume that $N$ is $\gamma$-fixed. Therefore we have
\begin{equation}
\label{diagram twist definition}
\begin{tikzcd}
(\Bb_N^s)_\gamma=\Bb_N^s\times_{\Spec\C}\Spec\C=\Bb_N^{\gamma(s)}
\arrow[d, "\Spec(\gamma)"]\arrow[r]&\Spec\C\arrow[d, "\Spec(\gamma)"]&\C\\
\Bb_N^s\arrow[r]&\Spec\C&\C\arrow[u, "\gamma"]
\end{tikzcd}
\end{equation}
where we abusively write $\Spec(\gamma)$ for both the original morphism and its base-change, a convention we will use throughout when there is no danger of confusion.

Diagram \eqref{diagram twist definition} exhibits $\Bb_N^s$ and $\Bb_N^{\gamma(s)}$ as isomorphic $\Q$-varieties,  but the two are not isomorphic $\C$-varieties unless $\gamma=\id_\C$, and $\gamma$ does not even induce a continuous map of analytifications unless it is complex conjugation or the identity. None the less, there is still a natural map on homology, thanks to
\begin{theorem}[\cite{DLP}]
\label{thm DLP}
For any $(s,N)$ such that $\Ad(s)N=qN$, we have an isomorphism of Chow groups and homology
\[
A_*(\Bb_N^s)\overset{\sim}{\to} H_*(\Bb_N^s;\Z)
\]
via the cycle class map.
\end{theorem}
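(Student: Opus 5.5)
The plan is to show that $\Bb_N^s$ admits a paving (an $\alpha$-partition) by affine spaces. For such varieties the cycle class map $A_*\to H_*(-;\Z)$ is an isomorphism, in the style of Fulton, Example 19.1.11. Whenever $X$ has a finite filtration by closed subvarieties $X=X_0\supset X_1\supset\cdots$ with each $X_i\setminus X_{i+1}$ a disjoint union of affine spaces, the localization sequences in Chow groups and in Borel--Moore homology are compatible with the cycle class maps. For $\mathbb{A}^k$ both groups are $\Z$ in degree $k$ and vanish otherwise, and the cycle class map is an isomorphism there. Moreover, the Borel--Moore long exact sequences split into short exact sequences, since all homology sits in even degrees and is free. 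Induction on the filtration together with the five lemma then gives the isomorphism for $X$. Since $\Bb_N^s$ is projective, Borel--Moore homology coincides with ordinary homology.

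It remains to produce the paving. First I would reduce from $\Bb_N^s$ to a Springer fibre of a smaller reductive group. Write $s$ as a product $s=s_cs_h$ of its compact and hyperbolic parts, or more algebraically, use the grading defined by $s$. The fixed locus $\Bb^s$ is a disjoint union of flag varieties $\Bb_{Z_G(s)}$ of the reductive group $Z_G(s)$, one for each $W_{Z_G(s)}\backslash W$ coset. On each component, the condition $N\in\mathrm{Lie}(B)$ becomes a condition of the form $N\in\mathfrak{b}\cap\g_q$, where $\g_q$ is the $q$-eigenspace of $\Ad(s)$, a $Z_G(s)$-stable subspace. So each component of $\Bb_N^s$ is a ``graded Springer fibre'' for the graded Lie algebra $\g=\bigoplus_\lambda \g_\lambda$ determined by $s$, with $N\in\g_q$.

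The paving of such graded Springer fibres is the main point, and I expect it to be the main obstacle. I would follow De Concini--Lusztig--Procesi. Choose a cocharacter $\phi\colon\Gm\to Z_G(s)$ through $N$, coming from a graded Jacobson--Morozov $\mathfrak{sl}_2$-triple in which the $h$ lies in $\g_1$ and the $f$ lies in $\g_{q^{-1}}$. The existence of such a triple is guaranteed by the graded Jacobson--Morozov theorem. Use this cocharacter to contract $\Bb_N^s$ onto the fixed locus of the corresponding torus, via Bialynicki-Birula. The attracting cells are affine bundles over pieces of the fixed locus. The difficulty lies in controlling those fixed pieces and showing they are again paved. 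DLP achieve this by an inductive argument on the Levi subgroup, together with a reduction to distinguished nilpotents, where the fibres are handled via a vector bundle/transversal slice argument. I would attempt to import exactly that induction, checking at each step that the pieces remain of the same graded-Springer form for a Levi of $Z_G(s)$.

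Given the paving, the cycle class isomorphism follows formally from the first paragraph. Torsion-freeness and odd-vanishing of $H_*(\Bb_N^s;\Z)$ come out as by-products.
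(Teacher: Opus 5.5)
The first paragraph of your plan (localization sequences, five lemma, projectivity) is fine. The gap is in the geometric input you feed into it. Importing the De Concini--Lusztig--Procesi induction does not produce a paving of $\Bb_N^s$ by affine spaces.

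\textbf{Where the induction stops.} Run Bia\l ynicki-Birula with the graded Jacobson--Morozov cocharacter $\phi$ on the smooth variety $\Bb^s$ and intersect with $\Bb_N^s$. You first need each intersection to be smooth, which you do not address. Writing $\g(i)$ for the $\phi$-weight spaces, smoothness follows because $N$ lies in the open orbit of $Z_G(s)\cap P_\phi$ on $\bigl(\bigoplus_{i\ge 2}\g(i)\bigr)\cap\g_q$. Granting this, the pieces are affine bundles over smooth projective varieties of the form $Y=\{\mathfrak{b}\in\Bb_L : N\in\mathfrak{b}\}$, with $L=Z_{Z_G(s)^\circ}(\phi(\Gm))$.

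You can keep going by contracting $Y$ with a torus in $Z_L(N)$, which replaces the group by a Levi. But the induction terminates once $N$ is distinguished in the relevant graded sense. At that point every torus in $Z_L(N)$ is central in the Levi, as is $\phi(\Gm)$, so each acts trivially on $Y$ and there is nothing left to contract. The ``vector bundle/transversal slice'' argument you mention gives smoothness of these terminal $Y$, not cells. DLP do not pave these terminal varieties for general $G$, which is why their theorem is formulated as property (S) rather than as an affine paving. So your claim of an affine paving of $\Bb_N^s$ is unsupported, and the key step fails as stated.

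\textbf{What is actually needed.} DLP prove directly that the terminal smooth projective varieties have property (S): $H_{\mathrm{odd}}=0$, $H_{\mathrm{ev}}$ is torsion-free, and the cycle map is an isomorphism. They then propagate (S) through the $\alpha$-partition. Your localization/five-lemma argument does extend to $\alpha$-partitions whose pieces are affine bundles over varieties with property (S). For this you use that flat pullback along an affine bundle is surjective on Chow groups and compatible with the Thom isomorphism in Borel--Moore homology; injectivity then follows from the homology side.

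However, proving (S) for the terminal pieces, where no torus action is available, is the real content of the theorem, and your proposal does not supply it. As written, it reduces the statement to the cited result while misdescribing what that result's proof yields. The paper itself gives no proof and simply cites DLP.

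A minor point: when the derived group of $G$ is not simply connected, the components of $\Bb^s$ are flag varieties of $Z_G(s)^\circ$, indexed by $W_{Z_G(s)^\circ}\backslash W$, not of $Z_G(s)$.
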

Writing $A_*(X)_\Q=A_*(X)\otimes_\Z\Q$ for any scheme $X$, we therefore consider the standard modules as realized on $\C\otimes_\Q A_*(\Bb_N^s)_\Q$, and and define a $\Q$-linear isomorphism
\begin{equation}
\label{Spec gamma flat pullback eqn defn}
\Spec(\gamma)^*\colon A_*(\Bb_N^s)_\Q\overset{\sim}{\to} A_*(\Bb_N^{\gamma(s)})_\Q
\end{equation}
by flat pullback of algebraic cycles. We will occasionally refer to \cite{CG} for results written in the language of Borel-Moore homology, this is harmless by Theorem \ref{thm DLP}. Note that by \cite[Cor. 18.3.2]{Fulton}, the Chern character 
\[
\Ch_*\colon K(\Bb_N^s)\otimes_\Z\Q\overset{\sim}{\to} A_*(\Bb_N^s)_\Q
\]
is an isomorphism.
\subsection{Standard modules as modules over the Bernstein subalgebra}
\label{subsection standard as module over Bernstein}
We define an action of $G$-equivariant line bundles on the diagonal 
$\Delta\Nn\subset\St$ on $A_*(\Bb_N^s)_\Q$ based on the digression in \cite[\S 8.2]{CG}: Let $\Ll$ be such a line bundle, and consider its class as belonging to $K(\Bb)$ via the Thom isomorphism. Consider the natural embeddings
\[
\begin{tikzcd}
\Bb^s\arrow[r, "\iota", hook]&\Bb\\
\Bb^{\gamma(s)}=(\Bb^s)_\gamma\arrow[u,"\Spec(\gamma)"]\arrow[r, "\iota_\gamma", hook]& \Bb\arrow[u, "\Spec(\gamma)"],
\end{tikzcd}
\]
and let $v\in A_*(\Bb_N^s)_\Q$. Then 
\[
\Ll\cdot v:=\Ch^*(\iota^*\Ff)\frown v
\]
is the action of cohomology on Borel-Moore homology \cite[\S 2.6.40]{CG} and $\Ch^*$ is the cohomological Chern character \cite{BFMIHES} \cite[\S 5.8]{CG}.
\begin{lem}
The $\Q$-linear isomorphism $\Spec(\gamma)^*$ in \eqref{Spec gamma flat pullback eqn defn} is linear for the $\cdot$-action of line bundles.
\end{lem}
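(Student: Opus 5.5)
The plan is to express $\Ll\cdot v$ entirely in Chow/$K$-theoretic operations that commute with flat pullback along scheme isomorphisms, and then use naturality. By Theorem \ref{thm DLP}, the cap product of the cohomological Chern character with Borel--Moore homology agrees with the operational Chern character in Fulton's sense acting on $A_*(\Bb_N^s)_\Q$. Concretely, $\Ch^*(\iota^*\Ll)\frown v = \Ch(\iota^*\Ll|_{\Bb_N^s})\cap v$, where $\Ch(L)=\exp(c_1(L))$ is a polynomial in $c_1(L)\cap-$ (nilpotent on a finite-dimensional scheme). So it suffices to prove
\[
\Spec(\gamma)^*\bigl(c_1(\iota^*\Ll)\cap v\bigr)=c_1(\iota_\gamma^*\Ll)\cap \Spec(\gamma)^*v .
\]

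First I will check that the line bundle is compatible with the twist. The equivariant line bundles $\Ll_\lambda$ on $\Bb$ come from characters $\lambda\in X^*$ and are defined over $\Q$, so there is a canonical isomorphism $\Spec(\gamma)^*\Ll\simeq\Ll$ on $\Bb$. Since the square of embeddings commutes (both $\Bb^s$ and $\Bb^{\gamma(s)}$ are closed subschemes and $\iota\circ\Spec(\gamma)=\Spec(\gamma)\circ\iota_\gamma$), it follows that $\Spec(\gamma)^*\iota^*\Ll\simeq\iota_\gamma^*\Ll$. The same holds after restricting to the Springer fibres $\Bb_N^s$ and $\Bb_N^{\gamma(s)}$, using that $N$ is $\gamma$-fixed.

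Next I will invoke Fulton's compatibility of Chern class operations with flat pullback \cite[Prop. 3.1(d)]{Fulton}: for a flat morphism $f$ and a line bundle $L$, $f^*(c_1(L)\cap\alpha)=c_1(f^*L)\cap f^*\alpha$. This applies here because $\Spec(\gamma)$ is an isomorphism of schemes (not of $\C$-schemes). The main point to address is that Fulton's intersection theory is developed for schemes of finite type over a field, whereas $\Spec(\gamma)$ is not a morphism over $\C$. I would handle this by viewing everything over the base field $\Q$: alternatively, cite Fulton's Chapter 20 setting of schemes over a regular base, or simply note that $c_1(L)\cap[V]$ is defined via a rational section and its Weil divisor on $V$. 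Both notions are transported verbatim by any scheme isomorphism, so the formula holds for isomorphisms by direct inspection on generators $[V]$.

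Finally, iterating gives the same identity for all powers $c_1^k$. Since $\Spec(\gamma)^*$ is $\Q$-linear and the exponential has rational coefficients, it commutes with $\Ch(\iota^*\Ll)\cap-$, proving the lemma. I expect the main obstacle to be this justification that everything is intrinsic to the underlying schemes (rather than to $\C$-structures): in particular, identifying the topological cap product of \cite{CG} with the operational Chow action via Theorem \ref{thm DLP}, which is where care is needed.
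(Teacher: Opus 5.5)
Your argument is correct, but it takes a different route from the paper. The paper lifts $v$ to $K$-theory: it chooses $\Ee\in K(\Bb_N^s)$ with $\Ch_*(\Ee)=v$ and rewrites $\Ll\cdot v$ as $\Ch_*(\iota^*\Ll\otimes\Ee)$ using the module property of the homological Chern character. It then moves $\Spec(\gamma)^*$ past $\Ch_*$ and past the tensor product, uses $\Spec(\gamma)^*[\Ll]=[\Ll]$ in the Grothendieck group, and unwinds the module property on $\Bb_N^{\gamma(s)}$. You stay entirely in Chow groups. You write $\Ch(\iota^*\Ll)\cap-$ as a finite polynomial with rational coefficients in $c_1(\iota^*\Ll)\cap-$. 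You then need only the flat pullback formula $f^*(c_1(L)\cap\alpha)=c_1(f^*L)\cap f^*\alpha$ and the isomorphism $\Spec(\gamma)^*\iota^*\Ll\simeq\iota_\gamma^*\Ll$. This is more elementary: it uses neither surjectivity of $\Ch_*$ nor any compatibility of the Riemann--Roch transformation with pullback. Your ``direct inspection on generators'' also handles cleanly the fact that $\Spec(\gamma)$ is not a morphism of $\C$-schemes. Cycles, dimensions, orders of vanishing and the Weil divisor of a rational section are intrinsic to the underlying scheme, so any isomorphism of schemes transports them. In exchange, the paper's $K$-theoretic version matches the formalism of Proposition~\ref{prop big diagram}, where the Hecke algebra acts through $K(\St/G\times\Gm)$ and the bivariant Riemann--Roch map. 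It also needs invariance of $\Ll$ under the twist only at the level of classes in $K$-theory.

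Two small corrections. First, identifying the topological cap product with Fulton's operational Chern character is not Theorem~\ref{thm DLP} itself, which only says the cycle map is an isomorphism. It is the compatibility of the cycle map with Chern classes \cite[\S 19.1]{Fulton}. Second, your first two suggestions for the foundational point do not literally apply. $\Bb_N^s$ is not of finite type over $\Q$, and $\Spec(\gamma)$ is not a morphism over $\C$. So the direct-inspection argument is the one to keep.
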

\begin{proof}
This is entirely formal but notationally involved. Write $\Spec(\gamma)$ for all the vertical maps in the pair of pullback squares
\[
\begin{tikzcd}
(\Bb_N^s)_\gamma\arrow[r, hook]\arrow[d]&(\Bb^s)_\gamma\arrow[d]\arrow[r]&\Spec\C\arrow[d]\\
\Bb_N^s\arrow[r, hook]&\Bb^s\arrow[r]&\Spec\C.
\end{tikzcd}
\]
Let $\Ee\in K(\Bb_N^s)$ be such that $\Ch_*(\Ee)=v$. 

We have, for any line bundle $\Ll$ on $\Delta\Nn$,
\begin{align}
\Spec(\gamma)^*(\Ll\cdot v)&=\Spec(\gamma)^*(\Ch^*(\iota^*\Ll)\frown\Ch_*(\Ee))
\label{eqn module1 pre}
\\
&=
\Spec(\gamma)^*(\Ch_*(\iota^*\Ll\otimes\Ee))
\label{eqn module1 post}
\\
&=
\Ch_*(\Spec(\gamma)^*\iota^*\Ll\otimes\Spec(\gamma)^*\Ee)
\label{eqn PB1}
\\
&=
\Ch_*(\iota_\gamma^*\Spec(\gamma)^*\Ll\otimes\Spec(\gamma)^*\Ee)
\nonumber
\\
&=
\Ch^*(\iota_\gamma^*\Spec(\gamma)^*\Ll)\frown\Ch_*(\Spec(\gamma)^*\Ee)
\label{eqn module2 pre}
\\
&=
\Ch^*(\iota_\gamma^*\Ll)\frown\Spec(\gamma)^*v,
\label{eqn module2 post}
\end{align}
where between \eqref{eqn module1 pre} and \eqref{eqn module1 post} we used smoothness of $\Bb^s$ and \cite[Prop. 18.1 (c)]{Fulton}, and likewise between \eqref{eqn module2 pre} and \eqref{eqn module2 post}; between \eqref{eqn module1 post} and \eqref{eqn PB1} and \eqref{eqn module2 pre} and \eqref{eqn module2 post}, we also used that the homological Chern character commutes with pullback by smooth proper morphisms \cite[(2.6)]{BFMIHES}, 
\cite[Thm. 18.1]{Fulton}. Between \eqref{eqn module2 pre} and \eqref{eqn module2 post} we used that $\Spec(\gamma)^*[\Ll]=[\Ll]$ in the Grothendieck group, as the associated bundle construction defining each generator $\Oo_{\lambda}$ of $K(\Nn/G\times\Gm)$, $\lambda\in X^*(T)$, is defined over $\Q$ (in particular this implies the equality in equivariant $K$-theory, see also Step 1 of the proof of Proposition \ref{prop big diagram}). Between 
\eqref{eqn module1 post} and \eqref{eqn PB1} we used that $\Spec(\gamma)$ is flat and $\iota^*\Ll$ is a line bundle.
\end{proof}
%\todo[inline]{for pullback and tensor product commuting, note that $\iota^*\Ll$ is a vector bundle, and $\gamma^*\iota^*$ is a vector bundle, so the tensor product is the underived tensor product for which this formula is true.}

Now we recall the definition of the $H$-module structure on $A_*(\Bb_N^s)_\C$ from \cite[\S 8.2]{CG}. 
Chose a reference Borel subgroup $B_0$ of $G_{\mathrm{ad}}$ such that $\Bb=G_{\mathrm{ad}}/B_0$. We may chose $B_0$ to be defined over $\Q$, so that it is $\gamma$-stable. Let $B$ be another Borel subgroup of $G_{\mathrm{ad}}$, so that there is $g\in G(\C)$ such that $B=gB_0g^{-1}$. By \cite[6.1.1]{CG}  the induced isomorphism $\varphi_B$ 
\begin{equation}
\label{diag lambda transport}
\begin{tikzcd}
B\arrow[r, "g^{-1}(-)g"]\arrow[d, two heads]&B_0\arrow[d, two heads]\\
B/[B,B]\arrow[r, "\varphi_B"]& B_0/[B_0,B_0]=:\mathbb{T}
\end{tikzcd}
\end{equation}
of tori, depends only on $B$, not on $g$. Now for $\Oo_\lambda$ on $\Delta\Nn\subset\St$, the action of $\Oo(\lambda)$ on $\C\otimes_\Q A_*(\Bb_N^s)_{\Q}$ is defined in \cite[\S 8.2]{CG} as follows:
Let $v$ be a linear combination of cycles contained in a single connected component of $\Bb_N^s$. This component is contained in a unique connected component $\Bb_j$ of $\Bb^s$, and $\Bb_j=Z_{G}(s)\cdot B_j$ for some Borel subgroup $B_j$ \cite[8.8.7]{CG}. As $s\in B_j$ and other Borel subalgebras in $\Bb_j$ are $Z_{G}(s)$-conjugate, it is well-defined to put
\[
\Oo(\lambda)\star (1\otimes v):=\lambda(\varphi_{B_j}(s))\otimes(\Oo(\lambda)\cdot v).
\]
The action is then extended to the entire Chow group by linearity.
\begin{dfn}
\label{def Phi(gamma)}
Let $\Phi(\gamma)$ be the isomorphism of left $\C$-modules
\[
\Phi(\gamma)\colon \C_\gamma\otimes_\C\left(\C\otimes_\Q A_*(\Bb_N^s)_{\Q}\right)\to \C\otimes_\Q A_*(\Bb_N^{\gamma(s)})_\Q
\]
given by 
\[
\left(z_1\otimes\left(z_2\otimes v\right)\right)\mapsto z_1\gamma(z_2)\otimes\Spec(\gamma)^*v.
\]
\end{dfn}
The automorphism $\gamma$ induces an isomorphism 
\begin{equation}
\label{component group isomorphism}
\gamma\colon Z_{G}(s,N)\to Z_{G}(\gamma(s),N),
\end{equation}
and hence an isomorphism of component groups $\pi_0(Z(s,N))\simeq \pi_0(Z(\gamma(s),N))$, such that the diagram
\begin{equation}
\label{diagram component}
\begin{tikzcd}
\Bb_N^{\gamma(s)}\arrow[r, "\gamma(g)"]\arrow[d, "\Spec(\gamma)"]
&
\Bb_N^{\gamma(s)}\arrow[r]\arrow[d, "\Spec(\gamma)"]
&\Spec(\C)\arrow[d, "\Spec(\gamma)"]
\\
\Bb_N^s\arrow[r, "g"]&\Bb_N^s\arrow[r]&\Spec\C
\end{tikzcd}
\end{equation}
commutes.
\begin{lem}
\label{lem linearity component group}
The map $\Phi(\gamma)$ is $\pi_0(Z_G(N,s))$-linear, upon identifying component groups via \eqref{component group isomorphism}.
\end{lem}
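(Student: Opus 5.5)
The plan is to reduce the statement to functoriality of flat pullback of cycles under composition of morphisms, via the commutative diagram \eqref{diagram component}. First I would make the action precise. An element $g\in Z_G(s,N)(\C)$ acts on $\Bb_N^s$ by an automorphism of $\C$-schemes, hence on $A_*(\Bb_N^s)_\Q$ by pushforward $g_*$, which equals $(g^{-1})^*$ since $g$ is an isomorphism. The identity component acts trivially: by Theorem \ref{thm DLP} the Chow action agrees with the usual homological action, and a connected group acts trivially on homology. So this is an action of $\pi_0(Z_G(s,N))$, extended $\C$-linearly to $\C\otimes_\Q A_*(\Bb_N^s)_\Q$. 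The action on the source of $\Phi(\gamma)$ is $g\cdot(z_1\otimes(z_2\otimes v))=z_1\otimes(z_2\otimes g_*v)$.

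The key point is that \eqref{diagram component} commutes as a diagram of schemes, i.e. $g\circ\Spec(\gamma)=\Spec(\gamma)\circ\gamma(g)$. This holds because $\gamma(g)$ is by definition the base change of the morphism $g$ along $\Spec(\gamma)\colon\Spec\C\to\Spec\C$. All maps in the square are isomorphisms, hence flat, so contravariant functoriality of flat pullback \cite[Ch. 1]{Fulton} gives
\[
\Spec(\gamma)^*\circ g^*=\gamma(g)^*\circ\Spec(\gamma)^*
\]
on $A_*(\Bb_N^s)_\Q$. Applying this to $g^{-1}$ and using $\gamma(g^{-1})=\gamma(g)^{-1}$ gives $\Spec(\gamma)^*\circ g_*=\gamma(g)_*\circ\Spec(\gamma)^*$.

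Then I would compute directly:
\[
\Phi(\gamma)(g\cdot(z_1\otimes(z_2\otimes v)))=z_1\gamma(z_2)\otimes\Spec(\gamma)^*g_*v=z_1\gamma(z_2)\otimes\gamma(g)_*\Spec(\gamma)^*v=\gamma(g)\cdot\Phi(\gamma)(z_1\otimes(z_2\otimes v)).
\]
Finally I would check that $\gamma$ maps $Z_G(s,N)^\circ$ onto $Z_G(\gamma(s),N)^\circ$, which holds because $\gamma$ induces an isomorphism of the underlying abstract schemes and so preserves connected components. Hence the identification \eqref{component group isomorphism} of component groups is well defined and the displayed identity descends to $\pi_0$. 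As a consequence, $\Phi(\gamma)$ carries the $\rho$-isotypic part to the $\rho_\gamma$-isotypic part.

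The main obstacle is the first step: making sure the component group action in \cite{CG}, defined on Borel--Moore homology, really is the Chow-theoretic pushforward, and that it factors through $\pi_0$ at the level of $A_*(\Bb_N^s)_\Q$. I would handle this by transporting the action through the cycle class isomorphism of Theorem \ref{thm DLP}. Pushforward along proper maps commutes with the cycle class map \cite[Ch. 19]{Fulton}, so the Chow action is the homological one, and triviality of $Z_G(s,N)^\circ$ follows on the homology side. Commutation of flat pullback with composition is then routine.
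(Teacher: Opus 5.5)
Your argument is correct and is the paper's own: the paper's entire proof is the commutativity of \eqref{diagram component}, and you have just spelled out the functoriality of pullback under that square and the descent to $\pi_0$ that this one-liner leaves implicit. One caution about your closing aside, which the lemma does not need: because $\Phi(\gamma)$ is $\gamma$-semilinear in the inner $\C$-factor, it sends the $\rho$-isotypic part to the isotypic part of the Galois conjugate of $\rho$ (the representation with character $\gamma\circ\chi_\rho$) transported along \eqref{component group isomorphism}, and this is the $\rho_\gamma$-isotypic part only when $\chi_\rho$ is $\gamma$-invariant (e.g.\ rational-valued).
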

\begin{proof}
This is exactly commutativity of \eqref{diagram component}.
\end{proof}
\begin{lem}
\label{BernsteinAction}
The map $\Phi(\gamma)$ intertwines, for each $\Oo_\lambda$ the actions 
\[
\Oo_\lambda(z_1\otimes(z_2\otimes v))=z_1\otimes(\Oo_\lambda\star(z_2\otimes v))
\]
and the $\star$-action defined above on $A_*(\Bb_N^s)_\C$.
\end{lem}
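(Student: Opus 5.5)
We must show: $\Phi(\gamma)(z_1\otimes(\Oo_\lambda\star(z_2\otimes v)))=\Oo_\lambda\star\Phi(\gamma)(z_1\otimes(z_2\otimes v))$, where on the right the $\star$-action is the one defined on $\C\otimes_\Q A_*(\Bb_N^{\gamma(s)})_\Q$ using $\gamma(s)$. By linearity in $v$ and $\C$-linearity of both sides in $z_1$, it suffices to treat a cycle $v$ supported on a single connected component of $\Bb_N^s$. This component lies in a component $\Bb_j=Z_G(s)\cdot B_j$ of $\Bb^s$, with $s\in B_j$.

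\emph{Matching the components.} Since $\Spec(\gamma)\colon \Bb^{\gamma(s)}\to\Bb^s$ is an isomorphism of $\Q$-schemes, it carries connected components to connected components. The component $\Bb_j$ corresponds to the component of $\Bb^{\gamma(s)}$ containing the point $\gamma(B_j)$, the Borel obtained by applying $\gamma$ to the coordinates of $B_j$. This uses that $B_0$, and hence $\Bb$, is defined over $\Q$. Moreover $\gamma(B_j)=Z_G(\gamma(s))\cdot\gamma(B_j)$-conjugate data, in parallel with \eqref{component group isomorphism}. Hence $\Spec(\gamma)^*v$ is supported on the component $\Bb_{j,\gamma}=Z_G(\gamma(s))\cdot\gamma(B_j)$, and $\gamma(s)\in\gamma(B_j)$.

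\emph{The scalar.} Applying $\Phi(\gamma)$ to the left-hand side gives
\[
z_1\gamma\bigl(z_2\lambda(\varphi_{B_j}(s))\bigr)\otimes\Spec(\gamma)^*(\Oo(\lambda)\cdot v).
\]
By the preceding lemma, $\Spec(\gamma)^*(\Oo(\lambda)\cdot v)=\Oo(\lambda)\cdot\Spec(\gamma)^*v$. It therefore remains to show
\[
\gamma\bigl(\lambda(\varphi_{B_j}(s))\bigr)=\lambda\bigl(\varphi_{\gamma(B_j)}(\gamma(s))\bigr).
\]
Choose $g$ with $B_j=gB_0g^{-1}$. Then $\gamma(B_j)=\gamma(g)B_0\gamma(g)^{-1}$ because $B_0$ is $\gamma$-stable, so by \eqref{diag lambda transport} we get $\varphi_{\gamma(B_j)}(\gamma(s))=\gamma(g)^{-1}\gamma(s)\gamma(g)\bmod[B_0,B_0]=\gamma(\varphi_{B_j}(s))$. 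Here $\gamma$ acts on $\mathbb{T}(\C)$ through the $\Q$-structure, using that $B_0$, $[B_0,B_0]$, and the quotient map are all defined over $\Q$. Finally, $\lambda\in X^*(\mathbb{T})$ is a character of a split torus, defined over $\Q$ (indeed over $\Z$), so $\lambda(\gamma(t))=\gamma(\lambda(t))$. Independence of $\varphi$ from the choice of $g$, by \cite[6.1.1]{CG}, makes this well-defined.

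\emph{Main obstacle.} The main point requiring care is the first step: identifying the component of $\Bb^{\gamma(s)}$ containing $\Spec(\gamma)^*v$ and its distinguished Borel $\gamma(B_j)$. This requires checking that on $\C$-points, $\Spec(\gamma)$ acts by applying $\gamma^{-1}$ (or $\gamma$, depending on convention) to coordinates. The convention must match the one in \eqref{diagram twist definition}, since it determines whether the fixed-point locus is that of $\gamma(s)$. Once this is fixed consistently, the scalar computation above is formal.
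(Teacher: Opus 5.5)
Your proposal is correct and follows essentially the same route as the paper: reduce to a cycle supported on one connected component, identify the corresponding component of $\Bb^{\gamma(s)}$ as the one containing $\gamma(g)B_0\gamma(g)^{-1}$, and verify $\gamma(\lambda(\varphi_{B_j}(s)))=\lambda(\varphi_{\gamma(B_j)}(\gamma(s)))$ using that $B_0$ is defined over $\Q$. You are slightly more explicit than the paper in two places: you invoke the preceding lemma for the $\cdot$-part of the $\star$-action, and you write the conjugation as $\gamma(g)^{-1}\gamma(s)\gamma(g)$, which is the form consistent with \eqref{diag lambda transport}.
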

\begin{proof}
If $B_j=gB_0g^{-1}$ is in the connected component of $\Bb^s$ containing the component of $\Bb_N^s$ in which a cycle $v$ is contained, then $B_{\gamma(j)}:=\gamma(g)\gamma(B_0)\gamma(g)^{-1}=\gamma(g)B_0\gamma(g)^{-1}$ is in the connected component of $\Bb^{\gamma(s)}$ in which $\gamma^*v$ is contained. Hence we have
\[
\lambda(\varphi_{B_{\gamma(j)}}(\gamma(s)))=\lambda(\gamma(g)\gamma(s)\gamma(g)^{-1})=\gamma(\lambda(gsg^{-1}))=\gamma(\lambda(\varphi_B(s))
\]
as required. The Lemma is true for general cycle classes by linearity.
\end{proof}
\subsection{Standard and essentially square-integrable representations}
Our first main result is the following theorem and its corollary.
\begin{theorem}
\label{thm Hecke twist}
The map $\Phi(\gamma)$ is $H$-linear, and commutes with the action of the component group 
of $Z_G(s,N)\simeq Z_G(\gamma(s),N)$. Hence we obtain an isomorphism
\[
\Phi(\gamma)\colon\C_\gamma\otimes_\C K(s,N,\rho)\overset{\sim}{\to}K(\gamma(s),N,\rho),
\]
$H$-modules, where the action on the left hand side is via 
\[
h\cdot(z_1\otimes v)=z_1\otimes h\cdot v.
\]
\end{theorem}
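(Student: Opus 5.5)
The plan is to reduce $H$-linearity to linearity for a set of algebra generators of $H$. I take the Bernstein presentation: $H$ is generated over $\Z[\bq^{\pm1/2}]$ by the line bundles $\Oo_\lambda$, $\lambda\in X^*$, on $\Delta\Nn\subset\St$, together with the elements $T_{s_\alpha}$ for simple reflections $\alpha\in\Sigma$. In the convolution realisation $H\simeq K(\St/G\times\Gm)$ these last elements are, up to sign and a twist by a line bundle, the classes of the structure sheaves of the closures of the $G$-orbits $Y_\alpha\subset\St$ of relative position $s_\alpha$. Compatibility with the $\Oo_\lambda$ is exactly Lemma \ref{BernsteinAction}. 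The parameter $\bq$ acts on both sides by the scalar $q$, which is rational and hence fixed by $\gamma$. Compatibility with $\pi_0$ is Lemma \ref{lem linearity component group}, so $\Phi(\gamma)$ carries the $\rho$-isotypic part to the $\rho_\gamma$-isotypic part. Once the $T_{s_\alpha}$ are handled, restricting to isotypic parts gives the stated isomorphism $\C_\gamma\otimes_\C K(s,N,\rho)\simeq K(\gamma(s),N,\rho_\gamma)$. It is an isomorphism because $\Spec(\gamma)^*$ is invertible, with inverse $\Spec(\gamma^{-1})^*$.

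For the generators $T_{s_\alpha}$ I follow \cite[\S 8.1--8.6]{CG}. The action on $H_*(\Bb_N^s)$ is through the localisation isomorphism: restrict to $(s,q)$-fixed points, apply convolution on $\St^{s}$, and compose with the bivariant Riemann--Roch/Chern character $\Ch_*$ twisted by the Todd class of the normal bundle. Concretely, a class $[\Ff]\in K(\St^s)$ acts on $v=\Ch_*(\Ee)$ by
\[
v\mapsto \Ch_*\bigl(p_{1*}(p_2^*\Ee\otimes^{L}\Ff)\bigr)\frown \Td(\text{normal bundle})^{-1}.
\]
The key step is to show that every ingredient in this formula commutes with $\Spec(\gamma)^*$, which is flat base change along $\Spec\C\to\Spec\C$.

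First, $\Spec(\gamma)^*$ commutes with proper pushforward and with flat and lci pullback, by flat base change. Second, $\Spec(\gamma)^*$ commutes with derived tensor products. Third, $\Ch_*$ and Todd classes commute with $\Spec(\gamma)^*$, by \cite[Thm. 18.3]{Fulton}, since the latter is flat. Fourth, the normal bundle of $(\Nn)^{s}$ in $\Nn$ pulls back to the normal bundle of $(\Nn)^{\gamma(s)}$. Finally, the orbit closures $\overline{Y_\alpha}$, their line bundle twists and the Steinberg variety itself are defined over $\Q$, as $G_{\mathrm{ad}}$, $B_0$ and the parabolics $P_\alpha\supset B_0$ are. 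Hence $\Spec(\gamma)^*[\Oo_{\overline{Y_\alpha}}]=[\Oo_{\overline{Y_\alpha}}]$ in $K(\St^{\gamma(s)})$, and the restriction to fixed points commutes with base change by the square \eqref{diagram twist definition}.

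The scalar factors also need care. These are the values at $(s,q)$ of equivariant characters, such as $\lambda(\varphi_B(s))$ and denominators $1-e^{\alpha}$ at $s$. They are sent by $\gamma$ to the corresponding values at $\gamma(s)$, exactly as in the proof of Lemma \ref{BernsteinAction}. This is why the left-hand side needs the $\gamma$-twisted scalar structure $z_1\otimes(z_2\otimes v)\mapsto z_1\gamma(z_2)\otimes\Spec(\gamma)^*v$.

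The main obstacle is that the localisation theorem and the bivariant Riemann--Roch of \cite{CG} are stated in Borel--Moore homology and use complex-analytic arguments, which $\Spec(\gamma)$ does not respect. I would first transfer everything to operational Chow/$K$-theory with $\Q$-coefficients, using Theorem \ref{thm DLP} and \cite[Ch. 18]{Fulton}. There the identities are algebraic and hold over any field, so their compatibility with the scheme isomorphism $\Spec(\gamma)$, which is not a $\C$-morphism, is automatic. If that transfer is awkward, I would alternatively check only that the unique $H$-module structure extending the $\Oo_\lambda$-action, subject to the quadratic relation and the Bernstein relation, is preserved. By Lemma \ref{BernsteinAction}, this reduces to showing that $T_{s_\alpha}$ acts by a $\Q$-rational correspondence, which again follows from the orbit closures being defined over $\Q$.
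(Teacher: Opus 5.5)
Your plan is essentially the paper's proof. The paper also reduces to the generators $\Oo_\lambda$ (Lemma \ref{BernsteinAction}) and the Schubert-type line-bundle classes attached to simple reflections. In Proposition \ref{prop big diagram} it checks that restriction to $s$-fixed points, the $\lambda_s^{-1}$-twist, Fulton's Riemann--Roch and convolution on $\St^s$ all commute with $\Spec(\gamma)^*$, working in rational Chow groups via Theorem \ref{thm DLP}, and then concludes with Lemma \ref{lem linearity component group}. Your fallback is not sound as stated: an $H$-action on a fixed space is not in general determined by the $\Oo_\lambda$-action together with the quadratic and Bernstein relations, since off-diagonal entries of $T_{s_\alpha}$ can be rescaled. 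Your main argument does not depend on it, however.
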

\begin{proof}
Combine Proposition \ref{prop big diagram} below and Lemma \ref{lem linearity component group}.
\end{proof}
\begin{rem}
\label{rem EM}
For generic $s$ and $\gamma(s)$, the standard modules $K(s,N,\rho)$ and $K(\gamma(s),N,\rho)$ are simple. For $G$ simply connected, by \cite[Thm. 5.5]{EvensMirkovic}, Lemma \ref{BernsteinAction} already implies that $K(s,N,\rho)\simeq K(\gamma(s),N,\rho)$ in this case.
\end{rem}
\begin{dfn}
A smooth admissible representation of $\omega$ of $G^\vee(F)$ is \emph{square-integrable modulo the centre} $Z(G^\vee(F))$ if $\omega$ admits a unitary central character and all matrix coefficients of $\omega$ are square-integrable functions on $G^\vee(F)/Z(G(F))$.

A smooth admissible representation $G^\vee(F)$ is 
\emph{essentially square integrable} if it is of the form $\omega\otimes\nu$ for an unramified character $\nu$ and square-integrable modulo centre representation $\omega$ of $G^\vee(F)$.
\end{dfn}
Recall that a standard representation $K(s,N,\rho)$ is an irreducible essentially square-integrable representation, if and only if, there is no proper parabolic subgroup of $G$ with Levi subgroup containing $(s,u)$, where $u=\exp(N)$ \cite[Thm. 8.3 (a)]{KLDeligneLanglands}. 
As recalled in \cite [\S 2.1]{Reeder}, this is equivalent to $Z_G(s)$ being semisimple with $N$ a distinguished nilpotent in $\Lie(Z_G(s))$. Hence the eigenvalues of $s$ are roots of unity by Kac's classification.
\begin{cor}
\label{Cor Hecke twist standard and formal degrees}
Let $\pi$ be an essentially square-integrable representation of $H$. Then $\gamma\cdot\pi$ is essentially square-integrable, and we have an equality of formal degrees
\[
d(K(s,N,\rho)=d(K(\gamma(s),N,\rho).
\]
More generally, if $\pi$ is a standard $H$-module, so is $\gamma\cdot\pi$, and twist by $\gamma$ also preserves genericity of the Langlands quotients (but with respect to the Whittaker datum for the $\gamma$-twisted generic character.
\end{cor}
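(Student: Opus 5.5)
The plan is to deduce everything from Theorem \ref{thm Hecke twist}, together with the Kazhdan--Lusztig criterion for discreteness and the known formula for formal degrees in terms of $(s,N,\rho)$.

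\emph{Step 1: standardness.} By Theorem \ref{thm Hecke twist}, for every standard module $K(s,N,\rho)$ there is an $H$-module isomorphism $\gamma\cdot K(s,N,\rho)\simeq K(\gamma(s),N,\rho_\gamma)$. This uses that we may take $N$ to be $\gamma$-fixed, as arranged above via Kottwitz.

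\begin{itemize}
\item Since $\gamma(q)=q$, we have $\Ad(\gamma(s))N=qN$.
\item $\rho_\gamma$ occurs in $H_*(\Bb_N^{\gamma(s)})$ because $\Phi(\gamma)$ is equivariant for the component groups (Lemma \ref{lem linearity component group}).
\end{itemize}

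So $(\gamma(s),N,\rho_\gamma)$ is again an admissible triple, and $\gamma\cdot\pi$ is standard.

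For the Langlands quotients, $\gamma\cdot-$ is an exact equivalence between $H$-modules and $\gamma$-twisted $H$-modules. It preserves simplicity and the unique simple quotient, so $\gamma\cdot L(s,N,\rho)\simeq L(\gamma(s),N,\rho_\gamma)$. Genericity is detected by $\rho$ being trivial together with $N$ lying in the dense orbit of $Z_G(s)$ on the $q$-eigenspace. Both conditions transport under the $\Q$-isomorphism $\Spec(\gamma)$, which is compatible with \eqref{component group isomorphism}. On the $p$-adic side, $\gamma\cdot$ sends a Whittaker functional for $\psi$ to one for $\gamma\circ\psi$, which gives the statement about the twisted Whittaker datum.

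\emph{Step 2: essential square-integrability.} By \cite[Thm. 8.3 (a)]{KLDeligneLanglands}, $K(s,N,\rho)$ is essentially discrete iff $(s,\exp N)$ lies in no proper Levi subgroup. Suppose $\gamma(s)$ and $N$ were contained in a proper Levi subgroup $L$. Apply $\gamma^{-1}$ on $\C$-points: $\gamma^{-1}(L)$ is a Levi subgroup, because Levis are $G(\C)$-conjugate to standard Levis defined over $\Q$. It is proper, and it contains $s$ and $N$. This contradicts the hypothesis, so $\gamma(s),N$ is again essentially discrete.

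\emph{Step 3: formal degrees.} This is the main obstacle. I would use the Reeder / Hiraga--Ichino--Ikeda formula for Iwahori-spherical discrete series. With the Haar measure normalized rationally, it expresses
\[
d(K(s,N,\rho))=\frac{\dim\rho}{|\pi_0(Z(s,N))|}\cdot\frac{|\gamma(0,\mathrm{Ad}\circ\phi)|}{\cdots}
\]
as a rational function in $q$ and the eigenvalues of $s$ on $\g$ and $\Lie$ of the centralizer. The reductions are as follows.

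\begin{itemize}
\item Twist $s$ so that the unitary part is trivial on the centre; by Kac, all eigenvalues of $s$ are then roots of unity.
\item Since $q\in\Q$, the formula is a rational expression with $\Q$-coefficients in the eigenvalues $\alpha(s)$.
\item Applying $\gamma$ gives $d(K(\gamma(s),N,\rho_\gamma))=\gamma(d(K(s,N,\rho)))$.
\item The component-group factor is preserved by \eqref{component group isomorphism}.
\end{itemize}

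It then remains to see that the formal degree is rational (real positive would not suffice), so that $\gamma$ fixes it. I would argue directly: the formal degree equals $\mathrm{vol}(I)^{-1}$ times the trace of the idempotent of $\pi$ in the Plancherel decomposition of $H$, computed on the $\Q$-form of $J$ from \cite{Plancherel}. Alternatively, $d(\pi)=\mathrm{vol}(I)^{-1}\langle v,v\rangle^2/\sum_w |\langle \tilde v,T_w v\rangle|^2 q_w^{-1}$ over a $\Q$-structure. Comparing with $\gamma\cdot\pi$ via \eqref{HeckeMatCoeff} yields $d(\gamma\cdot\pi)=\gamma(d(\pi))$. Rationality of $d(\pi)$ then follows from the explicit product formula, whose $\Gal(\bar\Q/\Q)$-orbit over the roots of unity is stable. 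This last point is where I expect the most work.
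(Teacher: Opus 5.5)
\paragraph{Where the proposal agrees with the paper.} Your Steps 1 and 2 are fine and match the paper, which obtains standardness, genericity and essential discreteness directly from Theorem~\ref{thm Hecke twist} together with the Kazhdan--Lusztig criterion. Your argument that $\gamma^{-1}(L)$ is again a proper Levi subgroup containing $s$ and the $\gamma$-fixed $N$ supplies the detail behind the paper's ``clearly''.

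\paragraph{The gap: rationality of the formal degree.} The problem is in Step 3, at the point you flagged. You never establish that $d(K(s,N,\rho))$ is rational, and none of your suggested routes can do so, because each of them only reproduces the equivariance $d(\gamma\cdot\pi)=\gamma(d(\pi))$.
\begin{itemize}
\item The ``Galois-stability'' argument is circular. For $\sigma\in\mathrm{Gal}(\bar\Q/\Q)$ the product formula gives $\sigma(d(K(s,N,\rho)))=d(K(\sigma(s),N,\rho_\sigma))$. Concluding $\sigma(d)=d$ requires knowing that $\sigma(s)$ has the same eigenvalue data as $s$ on the relevant pieces of $\g$, and that is essentially the equality of formal degrees you are trying to prove.
\item The Plancherel idempotent of $\pi$ is carried by $\gamma$ to the idempotent of $\gamma\cdot\pi$, which is in general a different representation. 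So the rational structure of $J$ does not make its trace $\gamma$-invariant; again you get only equivariance.
\end{itemize}
The paper closes this step with an external input. By \cite{FOS}, with the rational normalization of Haar measure, $d(K(s,N,\rho))\in\Q$ and $d(K(s,N,\rho))=\dim(\rho)\,d(K(s,N,\triv))$. Combined with $d(K(\gamma(s),N,\triv))=\gamma(d(K(s,N,\triv)))$, this gives the equality. You need to invoke this result, or prove an equivalent rationality statement; without it Step 3 is incomplete.

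\paragraph{A smaller problem: complex conjugation.} The formal-degree formula is not ``a rational expression with $\Q$-coefficients in the eigenvalues'', because it involves absolute values and $\gamma$ does not commute with complex conjugation. The paper deals with this explicitly using \cite{Reeder}.
\begin{itemize}
\item Write $s=s_1\phi(\diag(q^{-1/2},q^{1/2}))$.
\item Essential discreteness of both $K(s,N,\rho)$ and $K(\gamma(s),N,\rho)$ forces $s_1$ and $\gamma(s_1)$ to be compact.
\item Keeping track of $\gamma(q^{1/2})=\pm q^{1/2}$, one gets $\overline{\lambda(w\gamma(s))}=\gamma\bigl(\overline{\lambda(ws)}\bigr)$, hence $|M(\lambda,\gamma(s))|^2=\gamma\bigl(|M(\lambda,s)|^2\bigr)$.
\end{itemize}
Your remark that the relevant eigenvalues of $s_1$ are roots of unity is the right ingredient. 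You must actually use it to replace complex conjugation by inversion on those eigenvalues before applying $\gamma$.

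The same objection applies to your alternative expression with $|\langle\tilde v,T_wv\rangle|^2$. In addition, that expression is an infinite sum over $w\in\tilde W$, and since $\gamma$ is in general discontinuous, applying it termwise needs justification, for example a finite closed form.
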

\begin{proof}
The last statement is a restatement of Theorem \ref{thm Hecke twist}.

If $\pi$ is square-integrable, then $\pi=K(s,N,\rho)$ for some $\rho$. Clearly, for $u$ as above, $(s,u)$ is not contained in any proper Levi subgroup of $G$ if and only if same is true for $(\gamma(s),u)$. Therefore the last statement follows from Theorem \ref{thm Hecke twist}.

Now we argue for the equality of formal degrees. Recall from \cite[Formula A]{Reeder}, that we have 
\[
d(K(\gamma(s),N,\triv))^{-1}=q^{\dim\Bb}\sum_{J\subset\Sigma}\frac{1}{P_J(q)}\sum_{\lambda\in\Lambda_J}|M(\lambda, \gamma(s))|^2,
\]
where $W_J$ is the parabolic subgroup of $W$ generated by $J\subset\Sigma$, $P_J(q)$ is its Poincar\'{e} polynomial, 
\[
\Lambda_J=\sets{\lambda\in X^{*,+}}{\stab{W}{\lambda}=W_J},
\]
where $X^{*,+}$ is the set of dominant weights, and $M(\lambda,s)\colon\Lambda_J\to\C$ is the function
\[
M(\lambda,s)=\sum_{w\in W}w\left(\lambda\prod_{\alpha>0}\frac{1-q^{-1}\alpha}{1-\alpha}\right)(s).
\]
The product is taken over all positive roots, and the action of the Weyl group is the natural one on rational functions on $T$. 

Recall that from $(s,N)$ $q$-commuting we can obtain commuting $(s_1,N)$: Let  $\phi$ be a homomorphism 
\[
\phi\colon\SL_2\to G
\]
obtained from $N$ via the Jacobson-Morozov theorem. Then $s=s_1\phi(\diag(q^{-1/2},q^{1/2}))$ and $\Ad(s_1)N=N$.

Now, in general, $\gamma$ of course does not commute with complex conjugation. In the case at hand, though, square-integrability of $K(s,N,\rho)$ and $K(\gamma(s),N,\rho)$ implies that $s_1$ and $\gamma(s_1)$ belong to the maximal compact subgroup of $T^\vee$. Therefore 
\[
\lambda(\gamma(s))=\lambda(\gamma(s_1))\lambda(\phi(\pm\diag(q^{-1/2},q^{1/2}))
\]
with $\lambda(\gamma(s_1))\in U(1)\subset\C^\times$ for any weight $\lambda$, and, by hypothesis, likewise for $s_1$, noting that
\[
\gamma(s_1)=\gamma(s)\phi(\gamma(q^{-1/2},q^{1/2})=\gamma(s)\phi(\pm\diag(q^{-1/2},q^{1/2}).
\]
Therefore
\begin{multline*}
\overline{\lambda(w\gamma(s))}=
\lambda(w\gamma(s_1))^{-1}
\lambda(\phi(\pm\diag(q^{-1/2},q^{1/2}))
\\
=
\lambda(w\gamma(s_1^{-1}))\lambda(\phi(\pm\diag(q^{-1/2},q^{1/2}))
=\gamma\left(\lambda(ws_1^{-1})\right)\lambda(\phi(\pm\diag(q^{-1/2},q^{1/2}))
=\gamma\left(\overline{\lambda(ws)}\right).
\end{multline*}
It follows that
\[
|M(\lambda,\gamma(s))|^2=
M(\lambda, \gamma(s))\overline{M(\lambda,\gamma(s))}=\gamma\left(|M(\lambda, s)|^2\right),
\]
and so $d(K(\gamma(s),N,\triv))=\gamma(d(K(s,N,\triv))$. On on the other hand, 
by \cite{FOS}, $d(K(s,N,\rho))\in\Q$. For general $\rho\neq \triv$, we have 
\[
d(K(s,N,\rho))=\dim(\rho)d(K(s,N,\triv))
\]
by \textit{op. cit}.

%we obtain 
%%
%\[
%|M(\lambda,\gamma(s))|^2=M(\lambda,\gamma(s))\overline{M(\lambda,\gamma(s))}=
%\gamma
%\]

\end{proof}
\begin{cor}
\label{Cor DL}
If $L_{s,N,\rho}$ is the simple $H_I$-module associated to $(s,N,\rho)$ by \cite{KLDeligneLanglands} or \cite{CG}, then $\gamma\cdot L_{s,N,\rho}=L_{\gamma(s),N,\rho}$.
\end{cor}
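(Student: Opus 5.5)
The plan is to deduce the statement from Theorem \ref{thm Hecke twist} by passing from standard modules to their simple quotients, using that twisting by $\gamma$ is an exact functor which preserves simplicity. Recall from \cite{KLDeligneLanglands}, \cite{CG} that $L_{s,N,\rho}$ is the unique simple quotient of $K(s,N,\rho)$ (the Langlands quotient). A fallback characterisation, valid in any case, is that $L_{s,N,\rho}$ is the unique simple constituent of $K(s,N,\rho)$ that does not occur in any $K(s,N',\rho')$ with $N'$ in a strictly larger $Z_G(s)$-orbit closure containing the orbit of $N$. I would use whichever of these the references supply, and check that it is preserved by $\gamma$.

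First, the functor $M\mapsto \C_\gamma\otimes_\C M$ is an equivalence of abelian categories from $H\otimes\C$-modules to itself, with quasi-inverse given by twisting by $\gamma^{-1}$. This holds because $H$ is defined over $\Z[\bq^{\pm1/2}]$ and $q^{1/2}$ may be taken rational, or at least its $\gamma$-image is compatible with the specialisation used. Consequently the twist sends simple modules to simple modules and surjections to surjections, and it preserves composition series together with their multiplicities.

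Second, apply the twist to the surjection $K(s,N,\rho)\twoheadrightarrow L_{s,N,\rho}$. By Theorem \ref{thm Hecke twist} this gives a surjection $K(\gamma(s),N,\rho)\twoheadrightarrow \gamma\cdot L_{s,N,\rho}$ whose target is simple. Uniqueness of the simple quotient of $K(\gamma(s),N,\rho)$ then forces $\gamma\cdot L_{s,N,\rho}\simeq L_{\gamma(s),N,\rho}$, where $\rho$ is transported along \eqref{component group isomorphism} as in Lemma \ref{lem linearity component group}.

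If only the orbit-closure characterisation is available, I would instead note two facts. The map $N'\mapsto N'$ (after conjugating so that each orbit has a $\Q$-point, as in the discussion before \eqref{diagram twist definition}) matches the $Z_G(s)$-orbits on $\g^{s,q}$ with the $Z_G(\gamma(s))$-orbits on $\g^{\gamma(s),q}$. This matching is induced by the $\Q$-isomorphism $\Spec(\gamma)$, so it preserves closure relations and dimensions, since these are scheme-theoretic notions stable under field automorphisms. The characterisation therefore transports, and the same conclusion follows.

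The main obstacle I expect is this orbit-closure bookkeeping, in case the uniqueness of the simple quotient is not directly available. The nontrivial point there is that $\gamma$ is not continuous, so closures must be understood in the Zariski topology, where $\Spec(\gamma)$ is a homeomorphism.
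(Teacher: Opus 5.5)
Your main argument is correct and is essentially the paper's proof. The paper likewise takes $L_{s,N,\rho}$ to be, by definition, the unique simple quotient of $K(s,N,\rho)$, uses that $\gamma\cdot(-)$ is exact (indeed invertible, so it preserves simplicity), and identifies $\gamma\cdot K(s,N,\rho)\simeq K(\gamma(s),N,\rho)$ via Theorem \ref{thm Hecke twist} (equivalently Corollary \ref{Cor Hecke twist standard and formal degrees}); the orbit-closure fallback is therefore unnecessary.
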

\begin{proof}
By definition, $L_{s,N,\rho}$ is the unique simple quotient $H$-module of $K(s,N,\rho)$. By exactness of $\gamma\cdot(-)$, $\gamma\cdot K(s,N,\rho)$ has unique simple quotient $\gamma\cdot L(s,N,\rho)$. By Corollary \ref{Cor Hecke twist standard and formal degrees}, $\gamma\cdot K(s,N,\rho)=K(\gamma(s),N,\rho)$.
\end{proof}
%
%\begin{rem}
%\label{rem genericity}
%Note that $\gamma\cdot L_{s,N,\triv}=L_{\gamma(s),N,\triv}$ is again generic, but now with respect to the Whittaker datum obtained from the $\gamma$-twist of the original generic character.
%\end{rem}
%
%%
%
%
%The arguments of the preceeding sections have proven
%%
%
%%
%\begin{cor}
%an $H$-module $\pi$ is essentially discrete series if and only if $\gamma\cdot\pi$ is.
%\end{cor}
%%
%\begin{proof}
%Every essentially discrete series module is of the form $K(s,N,\rho)$, where $(s,N)$ are of the following form: There is $s_1$ a semisimple element of $G_H$ with semisimple centralizer $Z_s$
%\todo[inline]{for essentially tempered, should be semisimple modulo centre of $G$ or something}
%, such that $N$ is a distinguished nilpotent in $\Lie(Z_s)$. If $\phi\colon\SL_2\to G_H$ is obtained from $N$ by the Jacobson-Morozov theorem, then we require $s=s_1\phi(\diag(q^{-\frac{1}{2}}, q^{\frac{1}{2}}))$. 
%\end{proof}
%
%
%\begin{rem}
%Geometric constructions of standard modules of affine Hecke algebras are given in entirely $K$-theoretic terms in \cite{KLDeligneLanglands} and \cite{CellsIV}. However, treatment via Borel-Moore homology in \cite{CG} seems more adapted to the present problem; fixing the central character immediately, usually a disadvantage of constructible techniques, saves one from having to pull back sheaves between equivariant $K$-theory for different groups.
%\end{rem}
\section{Essentially discrete series representations of inner forms of $\GL_n$}
\subsection{Types and Hecke algebras}
\label{Types and hecke algebras}
In Section \ref{Types and hecke algebras} only, we reverse our conventions on $G$ vs $G^\vee$.

In this section, we give our proof of Theorem \ref{thm Clozel} for $G$ an inner form of $\GL_n$. We first recall
\begin{theorem}[\cite{BushKutzBook}, \cite{BushKutzComp}]
\label{thm BushnellKutzko}
Let $F$ be a non-archimedean local field and let $G=\GL_n(F)$; let $q$ be the cardinality of the residue field of $F$. Then
\begin{enumerate}
\item 
For any Bernstein block $\Rep_{\sS}(G)$ of the category of smooth representations $\Rep(G)$ of $G$, there are integers $\sett{m_i}_i$ and $\sett{f_i}_i$ and an equivalence of categories
\begin{equation}
\label{eqn BK type equivalence}
\Rep_{\sS}(G)\simeq\bigotimes_{i} H_{m_i}(q^{f_i})-\Mod,
\end{equation}
for integers $m_i$ and $f_i$, where $H_{m_i}(q^{f_i})$ is an affine Hecke algebra of type 
$\tilde{A}_{m_i}$ with parameter $q^{f_i}$, and the tensor product is taken over $\C$. 
\item
If $\Rep_{\sS}(G)$ contains an essentially discrete series representation, then on the right hand side of \eqref{eqn BK type equivalence} appears a single Hecke algebra $H_{m}(q^f)$, and the equivalence \eqref{eqn BK type equivalence} induces a bijection of essentially discrete series representations preserving formal degrees up to explicit scalar factors.
\end{enumerate}
\end{theorem}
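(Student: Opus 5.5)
The statement is the Bushnell--Kutzko theory of simple types and semisimple types for $\GL_n(F)$. The paper cites it rather than proving it afresh, so the proof plan is to assemble it from the main theorems of \cite{BushKutzBook} and \cite{BushKutzComp}.

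For part (1), fix an inertial class $\sS=[M,\sigma]_G$, with Levi subgroup $M=\prod_i \GL_{n_i}(F)^{m_i}$ and supercuspidal $\sigma=\bigotimes_i\sigma_i^{\otimes m_i}$. Group the factors so that the $\sigma_i$ in distinct groups are pairwise non-inertially-equivalent. First I would invoke \cite{BushKutzBook} for each block: the supercuspidal $\sigma_i$ contains a maximal simple type $(J_i,\lambda_i)$, and there is an attached field extension $E_i/F$ with residue degree $f_i$. The simple type $(J_i^{m_i},\lambda_i^{\otimes m_i})$ for the block $[\GL_{n_i}^{m_i},\sigma_i^{\otimes m_i}]$ of $\GL_{m_in_i}(F)$ is then an $\sS_i$-type. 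Its Hecke algebra $\mathcal{H}(G_i,\lambda_i)$ is isomorphic to the Iwahori--Hecke algebra of $\GL_{m_i}(E_i)$, which is the affine Hecke algebra of type $\tilde{A}_{m_i-1}$ (the paper writes $\tilde A_{m_i}$) with parameter $q^{f_i}$. Next I would invoke \cite{BushKutzComp}. It constructs a $G$-cover $(J,\lambda)$ of the $M$-type $\bigotimes_i(J_i^{m_i},\lambda_i^{\otimes m_i})$. It also shows that the resulting support-preserving injection of Hecke algebras $\mathcal{H}(M',\lambda_{M'})\to\mathcal{H}(G,\lambda)$, with $M'=\prod_i\GL_{m_in_i}(F)$, is an isomorphism, and the target factors as $\bigotimes_i\mathcal{H}(G_i,\lambda_i)$. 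The general theory of types (Bushnell--Kutzko, \emph{Smooth representations of reductive $p$-adic groups}) then gives $\Rep_\sS(G)\simeq\mathcal{H}(G,\lambda)\text{-}\Mod$ via $\pi\mapsto\Hom_J(\lambda,\pi)$. Composing these yields \eqref{eqn BK type equivalence}.

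For part (2), an essentially discrete series representation has cuspidal support of the form $\sigma\nu^{a}\otimes\cdots\otimes\sigma\nu^{a+m-1}$ for a single supercuspidal $\sigma$ up to unramified twist. I would obtain this Zelevinsky-segment shape from the Jacquet module and Casselman criterion on the Hecke-algebra side, so as not to rely on \cite{ZII}. Hence only one inertial class of supercuspidal occurs, and a single factor $H_m(q^f)$ appears. The equivalence sends a representation to a module over the Iwahori--Hecke algebra of $\GL_m(E)$, and it is compatible with parabolic induction and Jacquet modules via the cover property. This lets the Casselman criterion transfer: exponents go to exponents up to the fixed normalization by $\delta$-characters. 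So discrete series modulo centre correspond to discrete series modulo centre.

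The main obstacle is the formal degree comparison. Here I would use the Plancherel measure comparison of Bushnell--Henniart--Kutzko, or directly the formula for types: if $e_\lambda$ is the idempotent attached to $(J,\lambda)$, then $d(\pi)=\frac{\dim\lambda}{\mathrm{vol}(J)}\,d_{\mathcal{H}}(\pi^\lambda)$. Here $d_{\mathcal{H}}$ is the formal degree for the normalized trace on $\mathcal{H}(G,\lambda)$, which must be checked to match the standard trace on the affine Hecke algebra under the Bushnell--Kutzko isomorphism. That matching is an explicit scalar, since the isomorphism sends supports to supports and preserves the $*$-structure up to rescaling the basis elements by positive rationals. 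This is the factor the statement calls ``explicit scalar factors''.
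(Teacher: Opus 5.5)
Your plan follows the paper's route. The paper does not reprove this theorem; it assembles it from \cite{BushKutzBook} and \cite{BushKutzComp}, and reads part 2 off from Corollary 8.5.11, Theorem 7.7.1 and Corollary 7.7.11 of the book. Two intermediate claims in your part 1 are wrong as written. Neither affects the conclusion, since the statement only asserts that some $m_i$, $f_i$ exist.

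\textbf{The type in part 1.} $(J_i^{m_i},\lambda_i^{\otimes m_i})$ is a type for the supercuspidal class of the Levi subgroup $\GL_{n_i}(F)^{m_i}$. It is not an $\sS_i$-type of $\GL_{m_in_i}(F)$. The simple type of $\GL_{m_in_i}(F)$ is built from a stratum whose $\mathfrak{B}$ is a principal $\mathfrak{o}_{E_i}$-order of period $m_i$, and it is a $\GL_{m_in_i}(F)$-cover of that Levi type. Transitivity of covers keeps your use of \cite{BushKutzComp} intact.

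\textbf{The parameter.} Take $E_i=F[\beta_i]$. The Hecke algebra of the simple type is then $\mathcal{H}(m_i,q_{E_i}^{\,n_i/[E_i:F]})$. So the exponent of $q$ is $n_i/e(E_i|F)$, the torsion number of $\sigma_i$, and not the residue degree of $E_i/F$. For example, a depth-zero $\sigma_i$ of $\GL_2(F)$ has $E_i=F$ but parameter $q^2$. The algebra is the Iwahori--Hecke algebra of $\GL_{m_i}(K_i)$ with $K_i/E_i$ unramified of degree $n_i/[E_i:F]$, not of $\GL_{m_i}(E_i)$.

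\textbf{Homogeneity in part 2.} You do not need segments here, and deriving them would require classifying the discrete series of $H_m(q^f)$. A direct argument suffices.
\begin{itemize}
\item If two inertially distinct $\sigma_i$ occur, the injection $\mathcal{H}(M',\lambda_{M'})\to\mathcal{H}(G,\lambda)$ you already invoked is an isomorphism.
\item So $i_{P'}$ is an equivalence from the corresponding block of $M'=\prod_i\GL_{m_in_i}(F)$.
\item Hence every irreducible $\pi=i_{P'}\tau$ in $\Rep_\sS(G)$ has $\tau$ as a quotient of $r_{P'}\pi$ and as a subrepresentation of $r_{\bar P'}\pi$.
\item The central exponent of $\tau$ would then have to lie in opposite open cones, contradicting Casselman's criterion.
\end{itemize}

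\textbf{The bijection of discrete series.} The argument the paper points to, at the start of \S7.7 of \cite{BushKutzBook}, compares $L^2$-norms through the pairing built after (7.7.3) from the $*$-involution and the trace. That single structure gives both the bijection and the formal degrees. It is exactly this $*$/trace compatibility that Solleveld verifies in the S\'echerre--Stevens case.

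Your Casselman-criterion transfer via the cover property is a legitimate alternative. It does not avoid the $*$-structure, though. To know that real parts of exponents correspond, you must normalise $\mathcal{H}(M,\lambda_M)$ so that unitary unramified twists act unitarily. You must also match the image of $\mathcal{H}(M,\lambda_M)$ with the Bernstein elements, including the $\delta^{1/2}$ factors.

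\textbf{Formal degrees.} ``Preserves $*$ up to rescaling'' is not enough; you need an honest $*$-isomorphism. You can arrange one. The quadratic relation forces $\phi_s^*=\phi_s$ for the generator supported on $JsJ$. The length-zero generator can be rescaled by a positive scalar to be unitary. Support-preservation then makes the two traces proportional.
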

The statements of the second part are Corollary 8.5.11, Theorem 7.7.1, and Corollary 7.7.11 of \cite{BushKutzBook}, respectively.

Now let  $G$ be an inner form of $\GL_n$, so that $G(F)=\GL_n(D)$ for a division algebra $D$ with centre $F$.

%\todo[inline]{when block multiplicities are the same, get Morita equivalence of J-rings, whereas}
%
\begin{theorem}[\cite{Secherre}, \cite{SecherreStevens}]
\label{thm SecherreStevens}
Let $G$ be an inner form of $\GL_n$ and $L$ be a Levi subgroup, so that $L(F)=\prod_{i}\GL_{n_i}(D)^{e_i}$ with $\sum_{i}n_ie_i=n$ and let $\omega=\bigotimes_i\omega_i^{\otimes e_i}$ be a supercuspidal representation of $L(F)$ for $\omega_i$ a supercuspidal representation of $\GL_{n_i}(D)$, grouped such that $\omega_i$ is not inertially equivalent with $\omega_j$ if $i\neq j$. Let $\sS=[L,\omega]$ be the corresponding inertial equivalence class and $\Rep(G)_{\sS}$ the resulting Bernstein block. Then 
\begin{enumerate}
\item 
There is an equivalence of categories
\begin{equation}
\label{eqn SeSt equiv}
\Rep(G)_{\sS}\simeq\bigotimes_i H_{e_i}(q^{f_i})-\Mod
\end{equation}
for integers $f_i$, where $H_{e_i}(q^{f_i})$ is an affine Hecke algebra of type $\GL_{e_i}$ with parameter $q^{f_i}$, and the tensor product is taken over $\C$. 
\item
If $\Rep_{\sS}(G)$ contains an essentially discrete series representation, then $\sS$, then on the right hand side of \eqref{eqn SeSt equiv} appears a single Hecke algebra $H_{e}(q^f)$, and the equivalence \eqref{eqn BK type equivalence} induces a bijection of essentially discrete series representations preserving formal degrees up to explicit scalar factors.
\end{enumerate}
\end{theorem}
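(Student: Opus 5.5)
The statement is a compilation of results from \cite{Secherre}, \cite{SecherreStevens} (and \cite{SecherreStevensIV}), so the plan is to assemble it from the theory of types rather than to prove anything new. The pattern is the one already visible in Theorem \ref{thm BushnellKutzko} for $\GL_n(F)$.

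For part (1), I would first invoke the existence of simple types for $\GL_{n_i}(D)$ from \cite{Secherre}, \cite{SecherreStevens}. Each supercuspidal $\omega_i$ then contains a maximal simple type $(J_i,\lambda_i)$, and the Hecke algebra of that type is $\C[X^{\pm1}]$, generated by a uniformizer-type element. Next I would use the existence of $G$-covers of the type $\bigotimes_i(J_i,\lambda_i)^{\otimes e_i}$ of $L(F)$, following the semisimple-types construction of \cite{SecherreStevensIV}. Bushnell--Kutzko theory then shows that the cover $(J,\lambda)$ is an $\sS$-type, so that
\[
\Rep(G)_{\sS}\simeq H(G,J,\lambda)\text{-}\Mod .
\]
The Hecke algebra $H(G,J,\lambda)$ is then computed. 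Because the $\omega_i$ are pairwise not inertially equivalent, the support of $H(G,J,\lambda)$ is contained in the normalizer of $\prod_i\GL_{n_i}(D)^{e_i}$ permuting only equivalent blocks. Hence $H(G,J,\lambda)$ factors as a tensor product over $i$ of the algebras attached to the homogeneous blocks $\omega_i^{\otimes e_i}$. For each homogeneous block one identifies the algebra with the affine Hecke algebra $H_{e_i}(q^{f_i})$ of type $\GL_{e_i}$. Here $q^{f_i}$ is the cardinality of the residue field of the relevant extension, namely the centralizer of the simple stratum inside $\GL_{n_i}(D)$; it is obtained from the quadratic relation computed in the finite reductive quotient (a Howlett--Lehrer calculation).

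For part (2), I would first note that a block containing an essentially discrete series representation has a single inertial class in its supercuspidal support, which is a standard consequence of the classification of discrete series by segments. Then only one factor $H_e(q^f)$ appears. I would then use the fact that the type equivalence is given by $\pi\mapsto\pi^{(J,\lambda)}=\Hom_J(\lambda,\pi)$. Since $H(G,J,\lambda)$ carries the natural Plancherel/trace structure, Bushnell--Henniart--Kutzko show that this equivalence preserves Plancherel measures up to the constant $\dim\lambda/\mathrm{vol}(J)$. Consequently it sends essentially square-integrable representations to essentially square-integrable modules, and formal degrees are multiplied by that explicit constant.

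The main obstacle is the computation of $H(G,J,\lambda)$. One must show that its support is exactly the expected affine Weyl group of type $\GL_e$, and that the parameter is the claimed power $q^f$ and not twisted by a sign or character. For $D\neq F$ this is the delicate part of \cite{SecherreStevens}, where $\beta$-extensions must be chosen compatibly. I would rely on their results here rather than recompute them.
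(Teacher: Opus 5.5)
Your part (1) is fine as an assembly of citations, and once the block is known to be homogeneous the reduction to a single factor $H_e(q^f)$ is also fine. There is a genuine gap in part (2), at the step ``Consequently it sends essentially square-integrable representations to essentially square-integrable modules.''

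\textbf{The missing $*$/trace compatibility.} The Bushnell--Henniart--Kutzko comparison relates the Plancherel measure of $G(F)$ on $\Rep(G)_{\sS}$ to the Plancherel measure of $H(G,J,\lambda)$ equipped with \emph{its own} Hilbert algebra structure. That structure is the trace $f\mapsto\mathrm{tr}\,f(1)$, together with the involution sending $f$ to $g\mapsto$ (adjoint of $f(g^{-1})$ for a $J$-invariant inner product on $\lambda$).

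The statement, however, concerns essentially discrete series modules and formal degrees for the abstract affine Hecke algebra $H_e(q^f)$, with its standard structure $\tau(T_w)=\delta_{w,1}$, $T_w^*=T_{w^{-1}}$. This is also the notion used later in the paper, via \cite[Thm. 8.3]{KLDeligneLanglands}. To transport square-integrability and formal degrees across the isomorphism $H(G,J,\lambda)\simeq H_e(q^f)$ of \cite{Secherre}, \cite{SecherreStevens}, you must know something about that particular isomorphism. It depends on choices of $\beta$-extensions and on normalisations of the generators. You need it to be a $*$-isomorphism carrying one trace to a positive multiple of the other.

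Your ``main obstacle'' (correct support, correct parameter $q^f$) only pins down the algebra up to isomorphism, so it does not supply this. This compatibility is exactly the input the paper isolates. The argument of \cite[\S 7.7]{BushKutzBook}, which builds the pairing after (7.7.3), goes through for inner forms once preservation of the $*$-involution and the trace is known, and that preservation is \cite[Thm. 4.3]{SolleveldCompletion}. The trace part is essentially automatic from supports, since $T_w$ goes to a function supported on $JwJ$. The real content is the $*$-compatibility of the chosen normalisations, and this normalisation also enters the scalar relating formal degrees.

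\textbf{Homogeneity via segments.} You obtain homogeneity of a block containing an essentially discrete series representation from the classification of discrete series by segments. That fact is true. For $\GL_m(D)$, however, it rests on a Bernstein--Zelevinsky-style classification (locally \cite{MSBanal}). The paper's approach is designed to be independent of precisely this; see the discussion of \cite{ZII} and \cite{MSBanal} in the introduction. The paper instead uses the type-theoretic fact that essentially square-integrable representations can only contain simple types \cite{Secherre}, \cite{SecherreStevensIV}. You should replace the appeal to segments by that, or by an argument internal to the theory of covers.
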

The fact that essentially square-integrable representations can only contain simple types follows from \cite{Secherre}, \cite{SecherreStevensIV} (c.f. \cite[p.3]{SecherreV}). As noted by Solleveld \cite{SolleveldCompletion}, the remaining statements follow as in \cite[\S 7]{BushKutzBook} (c.f. the discussion at the start of Section 7.7 of \textit{loc. cit.}), once one knows that the isomorphism of affine and typical Hecke algebras behind \eqref{eqn SeSt equiv} preserves the $*$-involution and the trace, as with these two ingredients one builds the pairing after (7.7.3) of \textit{loc. cit.}. This preservation is checked by Solleveld \cite[Thm. 4.3]{SolleveldCompletion}.

%
%\begin{theorem}[\cite{BHK}, Thm. 4.3]
%\label{thm BHK}
%The equivalences of Theorems \ref{thm BushnellKutzko}, \ref{thm SecherreStevens} preserve Plancherel measures up to a constant factor, and hence preserve discrete series and tempered representations.
%\end{theorem}

%\todo[inline]{For simple types, formal degrees and preservation of discrete series is Bushnell-Kutzko \S 7. Try to find in Secherre-Stevens.}
%
%\todo[inline]{possibly remove therefore BHK theorem, and just say that holds for general types, because we don't need it.}

As remarked in the introduction, if $\sigma$ is a supercuspidal representation of $G(F)$, then clearly $\gamma\cdot\sigma$ is again supercuspidal. The compatibility with normalized parabolic induction, up to unramified twist of the induced representation, of $\gamma\cdot(-)$ \cite[(5.12)]{KSV} then implies that the exact auto-equivalence $\pi\mapsto\gamma\cdot\pi$ of $\Rep(G)$ sends the Bernstein block for an inertial class $\sS=[L,\sigma]_G$ to the Bernstein block for 
$\sS_\gamma=[L,\gamma\cdot\sigma]_G$. As remarked above, $[T,\triv]_G=[T,\gamma\cdot\triv]_G$, but in general $\sS_\gamma$ labels a new block. 
Note that $\sS$ is homogeneous if and only if $\gamma\cdot\sS$ is.

If $(K,\rho)$ is a type for $\sS$, then $(K,\gamma\cdot\rho)$ is a type for $\sS_\gamma$, and we have an isomorphism of rings
\[
H(G,K,\rho)\to H(G,K,\gamma\cdot\rho)
\]
given by 
\[
f\mapsto (g\mapsto \gamma(f(g)),
\]
which is $\gamma$-semilinear, where $\gamma(f(g))$ means we apply $\gamma$ to the matrix of coefficients of $f(g)\in\End_\C(\rho)$. Equivalently, we have a $\C$-linear isomorphism
\[
\C_\gamma\otimes_\C H(G,K,\rho)\to H(G,K,\gamma\cdot\rho)
\]
given by 
\[
z\otimes f\mapsto z\gamma(f).
\]

Recall that the construction of types in \cite{BushKutzBook}, \cite{BushKutzComp}, \cite{Secherre} \cite{SecherreStevens} is done with respect to a chosen additive character $\psi_F\colon F\to\C^\times$, but that neither the notion of essentially square-integrable $G(F)$-representation, essentially-square-integrable module over an affine Hecke algebra, refer to this choice, and likewise with standard representations and standard modules.
\begin{lem}
\label{lem types}
The type $(K, \gamma\cdot\rho)$ is again a type obtained by the constructions of Theorem \ref{thm BushnellKutzko} and \ref{thm SecherreStevens}, but with respect to the additive character $\gamma\circ\psi$. (c.f. Corollary \ref{Cor Hecke twist standard and formal degrees}.)
\end{lem}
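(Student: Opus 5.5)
We prove Lemma \ref{lem types} by transport of structure along $\gamma$. The constructions of \cite{BushKutzBook}, \cite{BushKutzComp}, \cite{Secherre}, \cite{SecherreStevens} fall into two layers. The first is purely arithmetic and independent of the coefficient field: simple strata $[\mathfrak{A},m,0,\beta]$, the hereditary orders, and the compact open groups $H^1(\beta,\mathfrak{A})\subset J^1(\beta,\mathfrak{A})\subset J(\beta,\mathfrak{A})$. The second consists of $\C$-valued characters and representations built from $\psi_F$. Applying $\gamma$ leaves the first layer untouched, so $K$ is unchanged. It therefore suffices to show that every $\C$-valued object in the second layer, once $\gamma$ is applied, is the object the recipe produces for the same stratum data but with the character $\gamma\circ\psi_F$.

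\textbf{Step 1: simple characters.} The set $\mathcal{C}(\mathfrak{A},0,\beta)$ of simple characters with respect to $\psi_F$ is defined by finitely many conditions. Each condition prescribes the restriction of $\theta$ to certain subgroups as $\psi_\beta=\psi_F\circ\mathrm{tr}\circ(\beta(x-1))$, or more generally $\psi_F\circ\mathrm{tr}_{A/F}$ composed with an $F$-linear form. The remaining conditions involve normalisation by groups and intertwining, and these are group-theoretic. Since $\gamma\circ\psi_\beta=(\gamma\circ\psi_F)_\beta$ and the group-theoretic conditions are visibly preserved, $\theta\mapsto\gamma\circ\theta$ is a bijection from $\mathcal{C}(\mathfrak{A},0,\beta;\psi_F)$ to $\mathcal{C}(\mathfrak{A},0,\beta;\gamma\circ\psi_F)$. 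We check this by induction along the defining sequence of strata, the inductive step being the same identity applied to $\psi_{\beta-\gamma'}$. The inner-form case of \cite{Secherre} has the same shape, with the reduced trace replacing the trace.

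\textbf{Step 2: the Heisenberg and $\beta$-extensions.} Let $\eta$ be the unique irreducible representation of $J^1$ containing $\theta$. Then $\gamma\cdot\eta$ is irreducible and contains $\gamma\circ\theta$, so by uniqueness it is the $\eta$ for $\gamma\circ\theta$. A $\beta$-extension $\kappa$ is characterised by extending $\eta$ and by an intertwining condition, namely that it is intertwined by all of $B^\times$. Both properties are preserved under $\gamma\cdot(-)$, because the intertwining set of $\gamma\cdot\kappa$ equals that of $\kappa$, as $\gamma$ acts on $\Hom$-spaces semilinearly and bijectively. Hence $\gamma\cdot\kappa$ is a $\beta$-extension for the new data.

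\textbf{Step 3: the level-zero part.} The simple type is $\lambda=\kappa\otimes\tau$, where $\tau$ is inflated from a cuspidal representation of a finite reductive group, $\GL_m(k_E)^e$ or its analogue for $D$. Then $\gamma\cdot\lambda=\gamma\cdot\kappa\otimes\gamma\cdot\tau$, and $\gamma\cdot\tau$ is again cuspidal, because cuspidality for finite groups is the vanishing of invariants under unipotent radicals and $\gamma$ preserves this. For semisimple types (covers) the construction is again built from products of the above together with Iwahori decompositions, all of which are group-theoretic, so the same argument applies. This also identifies $\gamma\cdot\rho$ with the type attached to $\sS_\gamma$.

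The main obstacle is Step 1, together with the compatibility in Step 2: one has to make sure that nowhere in the recursive definitions of simple characters and $\beta$-extensions does a choice depend on $\psi_F$ in a way other than through composition. Examples of what must be ruled out are normalisations by Gauss sums or square roots that $\gamma$ might move. To handle this we would go through \cite[\S3.2, \S5.2]{BushKutzBook} and the corresponding sections of \cite{SecherreStevens}, and confirm that every such choice is characterised by a uniqueness or intertwining property. Any property of that kind transports under $\gamma$.
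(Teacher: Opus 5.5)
Your proposal is correct and follows essentially the same route as the paper. Simple characters transport because $\gamma\circ\psi_\beta=(\gamma\circ\psi)_\beta$ and the other defining conditions are group-theoretic; $\eta$ transports by its uniqueness, $\kappa$ by its intertwining characterization, and cuspidality of the level-zero part is visibly preserved. The only differences are minor: the paper reduces to simple types (as in Solleveld's argument) instead of sketching covers directly, and it also notes that the $*$-duals and the conductor-dependent lattice theory are unchanged, since $\gamma\circ\psi$ has the same kernel as $\psi$.
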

\begin{proof}
We adopt the terminology and notation of \cite{BushKutzBook}, \cite{Secherre}.

As in the proof of Theorem \ref{thm SolleveldCompletion}, it is enough to check this for simple types, corresponding to a single tensor factor in either of Theorems \ref{thm BushnellKutzko} or \ref{thm SecherreStevens}.
Hence we consider simple strata $[\mathfrak{A}, n,0,\beta]$. The definition of simplicity refers to $\psi$, but the sets of simple strata are the same for $\psi$ and $\psi\circ\gamma$. Note also that the $*$- and $\dagger$-duals of Sections 1.1.4 and 1.3.6 of \cite{BushKutzBook}, respectively, are the same for $\psi$ and $\gamma\circ\psi$, and that Section 1.3 of \textit{op. cit.} depends only on the (unchanged by $\gamma)$ conductor of $\psi$, and the coherences provided by Theorem 3.6.1 of \textit {op. cit.} are $\Aut(\C)$-equivariant.

%\todo[inline]{on one hand seems clear from BK that Coxeter presentation of affine Weyl group for simple type depends only on the stratum in an $\Aut(\C)$ invariant way. On the other hand one side of the Hecke algebra isomorphism has to have something to do with $\lambda$, which changes appropriately as checked below.}
%
%\todo[inline]{but point should just be to check that everything is $\Aut(\C)$-invariant and then appeal to Solleveld.}

%we need to check this for types for supercuspidal representations. That is, in the notation of 
%\cite{SecherreI}, \cite{BushKutzBook}, consider a supercuspidal representation $\sigma=\mathrm{c-Ind}_{\bar{J}}^G(\Lambda)$, where $\Lambda$ is an extension of a representation $\lambda$ of the open compact subgroup $J$, where 
Now, the representation $\rho$ is constructed as
$\rho=\kappa\otimes\sigma_0$ is the tensor product of a homogeneous cuspidal representation of the finite group of Lie type obtained a quotient of a certain open compact subgroup $J=J(\beta, \mathfrak{A})$, and a character $\kappa$ obtained as follows: First, $\kappa$ is an extension to $J$ of a representation $\eta(\theta)$, in turn an extension of a simple character (in the sense of \cite{BushKutzBook}) $\theta$ of the group $H^1(\beta,\mathfrak{A})$.

The characterization of $\kappa$ in terms of $\eta$ in terms of intertwining sets implies that $\gamma\cdot\kappa(\eta)=\kappa(\gamma\cdot\eta)$, and in turn the unicity of $\eta(\theta)$ implies 
$\gamma\cdot\eta(\theta)=\eta(\gamma\cdot\theta)$. Tracing the definition of special character back though \cite{SecherreI} to \cite[(3.2.1)]{BushKutzBook}, we see that if $\theta$ is a special character of $H^{m+1}(\beta)$ defined with respect to $\psi$, then $\gamma\cdot\theta$ is a simple character of the same subgroup for the same stratum, only with respect to $\gamma\circ\psi$.

Clearly cuspidality of $\sigma_0$ is preserved by $\gamma\cdot(-)$.
%\todo[inline]{check through}
%\todo[inline]{think: notion of positivity of unramified character should not depend on choice of additive character, depends on choice of Borel...}
\end{proof}

Therefore to both $H(G,K,\rho)$ and $H(G,K,\gamma\cdot\rho)$ we can apply
\begin{theorem}[\cite{SolleveldCompletion}, p. 43]
\label{thm SolleveldCompletion}
The equivalences of categories in Theorem \ref{thm BushnellKutzko} and \ref{thm SecherreStevens} preserve standard representations.
\end{theorem}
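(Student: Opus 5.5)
The plan is to reduce to a single simple type. Then I check that the equivalence attached to that type identifies the standard objects on both sides: the parabolically induced modules $i_P^G(\sigma\otimes\nu)$ with $\sigma$ tempered and $\nu$ positive unramified, and the standard modules $K(s,N,\rho)$ over $H_e(q^f)$.

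\emph{Step 1: reduce to a single factor.} A Bernstein block $\Rep(G)_{\sS}$ with $\sS=[L,\omega]$ is governed by the $G$-cover of a type for $L$. On the Hecke algebra side, the tensor product decomposition in Theorems \ref{thm BushnellKutzko} and \ref{thm SecherreStevens} comes from the support-preserving embedding $H(L,K_L,\rho_L)\hookrightarrow H(G,K,\rho)$ of Bushnell--Kutzko, and this embedding is compatible with normalized parabolic induction. So a standard representation $i_P^G(\sigma\otimes\nu)$ corresponds to a module induced from the Levi-level Hecke algebra. Standard modules of a tensor product of affine Hecke algebras are exactly the external tensor products of standard modules of the factors. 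It therefore suffices to treat a simple type, where the right-hand side is a single $H_e(q^f)$.

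\emph{Step 2: tempered goes to tempered.} Standardness is characterized by two things: tempered Langlands data, and positivity of the unramified twist relative to a parabolic. On the group side, temperedness is given by Casselman's criterion through exponents of Jacquet modules. On the Hecke side, it is the analogous condition on weights of the commutative Bernstein subalgebra. By \cite[Thm. 4.3]{SolleveldCompletion}, the isomorphism $H(G,K,\rho)\simeq H_e(q^f)$ preserves the $*$-involution and the trace. Hence the equivalence is unitary and preserves the Plancherel measure up to a scalar, so it matches discrete series (as already recorded in part 2 of the theorems) and, by taking supports, tempered modules. The real-positive part of central characters is also matched: the isomorphism sends the Bernstein torus $X_{\mathrm{nr}}(L)$ to the torus of $H_e(q^f)$, sending positive unramified characters to positive ones. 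This is because the relevant map is induced by a homomorphism of lattices (Bushnell--Kutzko's $\zeta\mapsto\zeta^{f}$-type rescaling), which preserves the absolute value part.

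\emph{Step 3: match the inductions.} On the Hecke side, a standard module is $\mathrm{Ind}_{H_{P}}^{H}(\text{tempered}\otimes\text{positive character})$. By Step 1 this corresponds to $i_P^G(\sigma\otimes\nu)$. Conversely, every standard Hecke module has this form by the Hecke-algebra Langlands classification, and the Kazhdan--Lusztig modules $K(s,N,\rho)$ coincide with these induced modules.

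I expect the main obstacle to be Step 2. Specifically, the positivity condition and the parabolic $P$ must be transported correctly: the equivalence can rescale and reparametrize the Bernstein torus, and its normalization involves modulus characters. I would handle this first by checking explicitly, from the support-preserving property of the Bushnell--Kutzko embedding, that the dominance cones coincide.
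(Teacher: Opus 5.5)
Your outline is essentially the argument the paper relies on when it cites \cite{SolleveldCompletion}: reduce to a single simple type, use \cite[Thm. 4.3]{SolleveldCompletion} (preservation of the $*$-involution and trace) to match tempered and discrete series modules through the Plancherel measure, and then use the compatibility of covers with normalized parabolic induction, together with the Kazhdan--Lusztig identification of $K(s,N,\rho)$ with $\mathrm{Ind}(\text{tempered}\otimes\text{positive})$, to match standard objects. One small correction to your Step 2: the map from the Bernstein torus to the torus of $H_e(q^f)$ is a lattice-induced homomorphism only up to translation by a $W$-invariant element, so the lattice map alone does not show that positive characters go to positive ones; what forces the translation to be unitary is the tempered matching, i.e.\ ultimately the $*$-preservation.
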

We emphasize that this is a non-trivial condition because in general the root datum $\mathcal{R}$ associated to some $H(G,K,\rho)$ is not just that of $G$.

Now suppose that $\pi$ is a standard representation in $\Rep(G)_{\sS}$. By Theorem \ref{thm SolleveldCompletion}, $\pi$ corresponds to a $H(G,K,\rho)$-module
\begin{equation}
\label{eqn pi H-module GLN}
\bigotimes_i K(s_i,N_i)
\end{equation}
and $\gamma\cdot\pi$ corresponds to the $H(G,K,\gamma\cdot\rho)$-module
\begin{equation}
\label{eqn gamma pi H-module GLN}
\C_\gamma\otimes_\C\bigotimes_i K(s_i,N_i),
\end{equation}
by the discussion in Section \ref{section standard modules}.
Now we give our proof of Theorem \ref{thm Clozel}.

\subsection{Proof of Theorems \ref{thm standard} and \ref{thm Clozel} for inner forms of $\GL_n$}
We return to our convention that $G^\vee$ is the $p$-adic group.

To show that $\gamma\cdot\pi$ is again standard, it suffices by Section \ref{Types and hecke algebras} to show that the $\C_\gamma\otimes_\C\bigotimes_i H_{m_i}(q^{f_i})$-module \eqref{eqn gamma pi H-module GLN} is isomorphic via $\bigotimes_i\Phi(\gamma)$ to $\bigotimes_i K(\gamma(s_i),N_i)$ under the second isomorphism above to $\bigotimes_i H_{m_i}(q^{f_i})$. Therefore, it suffices to prove
%
%\todo[inline]{top pullback $\Spec(\gamma)^*$ seems not $G$-equivariant? because also act on $g$. However, on every generator of the Hecke algebra we have $\Spec(\gamma)^*[\Ff]=[\Ff]$ as sheaves, surely. We want to somehow not change the $G$-equivariant structure. Remember this map is supposed to be $z\mapsto z\gamma(f)$.}
%
%\todo[inline]{WANT TO SAY THAT ``EQUIVARIANT STRUCTURE IS DEFINED OVER $\Q$"}
\begin{prop} 
\label{prop big diagram}
Let $H$ be an affine Hecke algebra. Then the diagram 
\begin{center}
\begin{tikzcd}
\C_\gamma\otimes_\C\C\otimes_\Q K(\St/G\times\Gm)_\Q
\arrow[d, "i_s^*\circ\Forg_{\bangles{s}}"]
\arrow[rrr, "\id\cdot\gamma\otimes\Spec(\gamma)^*"]
&&&
\C\otimes_\Q K(\St/G\times\Gm)_\Q
\arrow[d, "i_{\gamma(s)}^*\circ\Forg_{\bangles{\gamma(s)}}"]
\\
\C_\gamma\otimes_\C\C\otimes_\Q K(\pt/\bangles{s})\otimes_\Q K(\St^{(s,q)})_\Q
\arrow[rrr, "\id\cdot\gamma\otimes\gamma\otimes\Spec(\gamma)^*"]
\arrow[d,"\mathrm{ev}"]
&&&
\C\otimes_\Q K(\pt/\bangles{\gamma(s)})\otimes_\Q K(\St^{\gamma(s),q)})
\arrow[d, "\mathrm{ev}"]
\\
\C_\gamma\otimes_\C\left(\C_a\otimes_\Q K(\St^{s,q})_\Q\right)
\arrow[d, "\id\otimes\left(-\otimes \left(1\boxtimes\lambda_s^{-1}\right)\right)"]
\arrow[rrr, "(\id\cdot\gamma)\otimes\Spec(\gamma)^*"]
&&&
\C_{\gamma(s)}\otimes_\Q K(\St^{\gamma(s),q})_\Q
\arrow[d, "\id\otimes\left(-\otimes \left(1\boxtimes\lambda_{\gamma(s)}^{-1}\right)\right)"]
\\
\C_\gamma\otimes_\C\left(\C_a\otimes_\Q K(\St^{s,q})_\Q\right)
\arrow[rrr, "(\id\cdot\gamma)\otimes\Spec(\gamma)^*"]
\arrow[d, "\id\otimes\id\otimes\mathrm{RR}"]
&&&
\C_{\gamma(s)}\otimes_\Q K(\St^{\gamma(s),q})_\Q
\arrow[d, "\id\otimes\id\otimes\mathrm{RR}"]
\\
\C_\gamma\otimes_\C\C_s\otimes_\Q A_*(\St^s)_\Q
\arrow[rrr, "\id\cdot\gamma\otimes\Spec(\gamma)^*"]
\arrow[d, "\id\otimes\id\otimes\mathrm{act}(v)"]
&&&
\C_{\gamma(s)}\otimes_\Q A_*(\St^{\gamma(s)})_\Q
\arrow[d, "\id\otimes\id\otimes\mathrm{act}\left(\Phi\left(\gamma\right)v\right)"]
\\
\C_\gamma\otimes_\C\C_s\otimes_\Q A_*(\Bb_N^s)_\Q
\arrow[rrr, "\Phi(\gamma)"]
&&&
\C_{\gamma(s)}\otimes_\Q A_*(\Bb_N^{\gamma(s)})_\Q,
\end{tikzcd}
\end{center}
where the columns encode the action of a single $H_{m_i}(q^{f_i})$ on a single $K(s_i,N_i)$ as per \cite[\S 8.2]{CG}, commutes. (Here, we have written $q=q^{f_i}$, $s=s_i$, etc. to unburden notation.)
\end{prop}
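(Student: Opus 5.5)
The plan is to verify commutativity one square at a time, from top to bottom. Each horizontal arrow is built from two pieces: the $\gamma$-semilinear twist on scalars, and flat pullback $\Spec(\gamma)^*$ along the base change of the relevant $\Q$-scheme. Each vertical arrow is one of the standard functorial operations of \cite[\S 8.2, \S 5.11]{CG}: forgetful restriction of equivariance, restriction to fixed points, evaluation at the central character, twist by the line bundle $\lambda_s$, the bivariant Riemann--Roch map, and convolution action on the fundamental classes. So the work consists of checking that each such operation is natural with respect to pullback along $\Spec(\gamma)$, and that each scalar it introduces transforms by $\gamma$.

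\textbf{Step 1 (top square).} First I will show that $\Spec(\gamma)^*$ acts as the identity on the generators of $K(\St/G\times\Gm)$: the classes $\Oo_\lambda$ on $\Delta\Nn$ and the classes attached to simple reflections. These are associated-bundle constructions over the $\Q$-forms of $\Bb$, $\Nn$ and $\St$, so they are defined over $\Q$, and pullback along $\Spec(\gamma)$ sends each one to itself. Next, restriction to $\bangles{s}\times\Gm$ followed by $i_s^*$ commutes with $\Spec(\gamma)^*$. This holds because, by the same base change as in \eqref{diagram twist definition}, $\Spec(\gamma)$ identifies $\St^{(s,q)}$ with $\St^{(\gamma(s),q)}$ compatibly with the closed embeddings $i_s$ and $i_{\gamma(s)}$, and because pullback commutes with flat base change. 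On the factor $K(\pt/\bangles{s})$, the representation ring of the diagonalizable group $\bangles{s}$, the induced map is the transport of characters along $\gamma\colon\bangles{s}\to\bangles{\gamma(s)}$.

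\textbf{Step 2 (evaluation and twist squares).} Evaluating a character $\chi$ at $s$ gives $\chi(s)$; on the other side one gets $\chi(\gamma(s))=\gamma(\chi(s))$, since $\chi$ is defined over $\Q$. This is the same computation as in Lemma \ref{BernsteinAction}, and it gives the $\mathrm{ev}$ square. For the square involving $\lambda_s$, I will note that $\lambda_s$ is the determinant of a $\Q$-defined normal bundle, twisted by a character evaluated at $s$. Hence $\Spec(\gamma)^*\lambda_s=\lambda_{\gamma(s)}$, with the scalar part transformed by $\gamma$. Tensoring with a line bundle commutes with flat pullback, exactly as in the proof of the lemma on the line-bundle action.

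\textbf{Step 3 (Riemann--Roch and action squares).} The map $\mathrm{RR}$ is $\Ch_*$ composed with cap product by a Todd class of the smooth ambient variety. Both are compatible with flat (indeed smooth, since it is an isomorphism of $\Q$-schemes) pullback by \cite[Thm. 18.3]{Fulton}, and the Todd class of a $\Q$-defined tangent bundle pulls back to itself. For the last square, convolution in $A_*(\St^s)$ acting on $A_*(\Bb_N^s)$ is built from smooth pullback, refined intersection and proper pushforward along the projections of $\Nn^s\times\Nn^s\times\Nn^s$. Each of these commutes with $\Spec(\gamma)^*$ by the functoriality of refined Gysin maps under flat base change \cite[Prop. 6.2, Thm. 6.2]{Fulton}. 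Here I use Theorem \ref{thm DLP} to pass freely between Chow groups and the homology used in \cite{CG}.

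\textbf{Main obstacle.} I expect the hardest point to be Step 1, together with the $\lambda_s$ square. One must make precise that the $G\times\Gm$-equivariant structures, and not merely the underlying sheaves, are fixed by $\Spec(\gamma)^*$, even though $\Spec(\gamma)$ is not $G(\C)$-equivariant; it intertwines $g$ with $\gamma(g)$. My first approach is to work throughout with the $\Q$-forms of the group and the schemes, so that $\Spec(\gamma)$ is the base change of the identity of a $\Q$-stack $\St/G\times\Gm$. The equivariant class is then literally unchanged, and only the evaluation at $s$ sees $\gamma$; this is accounted for by the semilinear scalar factor.
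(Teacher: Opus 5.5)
Your proposal is correct and follows essentially the same route as the paper. Both check the diagram square by square, verifying that each vertical operation commutes with pullback along the scheme isomorphism $\Spec(\gamma)$: forgetting equivariance and restricting to fixed points, evaluation at the central character, twisting by $\lambda_s^{-1}$, bivariant Riemann--Roch, and the convolution action. Both handle the top square by the $\Q$-rationality of the generators and of their equivariant structures.

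The one slip is in the $\lambda_s$ square. $\lambda_s=\bigotimes_{\alpha}\sum_i(-\alpha(s))^i\Lambda^iN_\alpha$ is not a determinant line bundle, and it is invertible only in complexified $K$-theory. The step should therefore be phrased as the paper does: the $\gamma$-semilinear pullback is a ring isomorphism taking $\lambda_s$ to $\lambda_{\gamma(s)}$, and hence $\lambda_s^{-1}$ to $\lambda_{\gamma(s)}^{-1}$.
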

In the diagram, we write $\bangles{s}$ for the smallest closed subgroup of $G$ containing $s$.
Note also that that $H_*(\St^s;\Q)$ and $H_*(\St^{\gamma(s)};\Q)$ are isomorphic to 
$A_*(\St^s)_\Q$ and $A_*(\St^{\gamma(s)})_\Q$, respectively. This follows from \cite[Cor. 18.3.2] {Fulton} and \cite[Thm. 6.2.4]{CG} (the proof in \textit{loc. cit.} works for $\Q$-coefficients, by \cite[Rem. 8.1.10]{CG}). The K\"unneth formula for homology and \cite[(5.2.4)]{CG} reduce the general case to products of the case of the Proposition.

Of course, in the case of essentially square-integrable representations, only a single tensor factor appears, and Proposition \ref{prop big diagram} also implies the statement about formal degrees, as formal degrees of essentially discrete series representations of affine Hecke algebras of type $\GL_r$ depend only on $N$, and all of the scalar factors in \cite[Cor. 7.7.11]{BushKutzBook} are $\Aut(\C)$-fixed.

\begin{proof}
We explain the meaning and commutativity of each box in the diagram. Modulo notation, the proof is almost entirely formal: at least on the tensor factors involving $K$-theory or Chow groups, it essentially amounts to various constructions commuting with isomorphisms of schemes, \textit{i.e.}, being well-defined. 

%We write $\bangles{s}$ for the smallest closed subgroup of $G_H$ containing $s$.
%
%First, note that that $H_*(\St^s;\Q)$ and $H_*(\St^{\gamma(s)};\Q)$ are isomorphic to 
%$A_*(\St^s)_\Q$ and $A_*(\St^{\gamma(s)})_\Q$, respectively. This follows from \cite[Cor. 18.3.2] {Fulton} and \cite[Thm. 6.2.4]{CG} (the proof in \textit{loc. cit.} works for any field coefficients). 

It is enough to check commutativity of the square on elements of the form $1\otimes 1\otimes \Ff$ for generators $\Ff$ of $K(\St/G\times\Gm)$. The bundles $\Oo_\lambda$ on the diagonal component $\Delta\Nn\subset\St$, pulled back from the line bundles $\Oo(\lambda)$ on $\Bb$, generate the Bernstein subalgebra. We checked commutativity for the Bernstein subalgebra in Section \ref{subsection standard as module over Bernstein}.

We may take the following classes as generators of the finite Hecke algebra $H_W$, following \cite[(7.6.1)]{CG}: let $\bar{Y}_\sigma\subset\Bb\times\Bb$ be the Schubert variety indexed by the simple reflection $\sigma$ in $W$. The projection $\bar{Y}_\sigma\to\Bb$ to the first coordinate is smooth with $\Pp^1$ fibres, hence the relative cotangent bundle is a line bundle on $\bar{Y}_\sigma$, namely $\Omega^1_{\bar{Y}_\sigma/\Bb}=\Ind(P_\alpha/B\times^B\C_{-\alpha})$, where $\Ind\colon\Coh(P_\alpha/B)\to \Coh(\bar{Y}_\sigma/G)$ is the induction functor of \cite[\S 5.2.16]{CG} and $\alpha$ is the simple root corresponding to $\sigma$. Finally, our generator is $\Ll_\sigma:=\pi_\sigma^*\Omega^1_{\bar{Y}_\sigma/\Bb}$, where $\pi_\sigma\colon T_{\bar{Y}_\sigma}^*(\Bb\times\Bb)\to \bar{Y}_\sigma$ is the conormal bundle projection.

\begin{ex}
If $G=\SL_2$, then $\Ll_\sigma=\Oo\boxtimes\Oo(-2)$ on the zero section component $\Pp^1\times\Pp^1$ of $\St$.
\end{ex}

Now we check commutativity.
\begin{enumerate}
\item 
We check the topmost square, which subsumes the first three isomorphisms in \cite[(8.1.6)]{CG}. The morphisms $i_s$ and $i_{\gamma(s)}$ are inclusion of $s$-, respectively $\gamma(s)$-fixed points, and the forgetful maps are induced by the corresponding obvious functors. 

Taking a generator $\Ll_\sigma$, the character that appears in the $K(\pt/\bangles{s})$-coordinate upon applying $i_s^*\circ\Forg_{\bangles{s}}$ is of the form $-\alpha(\varphi_{B_j}(s))$ as in \eqref{diag lambda transport}, where $\Bb_j$ is a connected component of $\Bb^s$ as in the proof of Lemma \ref{BernsteinAction}. We conclude again that the square commutes. Finally, note that $\Spec(\gamma)^*[\Ll_\sigma]=\Ll_\sigma$, as the Schubert varieties and their conormal bundles are defined over $\Q$.
This holds as equivariant sheaves: concretely, the equivariant structure is a vector bundle morphism $G\times\relSpec{\Sym\Ll_\sigma^\vee}\to\relSpec{\Sym\Ll_\sigma^\vee}$ over $T^*_{\bar{Y}_\sigma}(\Bb\times\Bb)$ commuting with $G$ action and fibrewise linear, and this morphism is defined over $\Q$. Therefore this square commutes.
\item
Now we check the second square, where, \textit{e.g.}, the top horizontal map sends $\alpha\otimes (z\otimes f\otimes\Ff)\mapsto \alpha\gamma(z)(f\circ\gamma)\otimes\Spec(\gamma)^*\Ff$. It clearly commutes and all the maps in the diagram are isomorphisms.
\item
We explain why the third square commutes. Recall from \cite[\S 5.11]{CG} that
\[
\lambda_s:=\bigotimes_{\alpha\in\mathrm{sp}(\Nn)}\sum_{i}(-\alpha(s))^i\Lambda^iN_\alpha,
\]
where $N=\bigoplus_{\alpha\in\mathrm{sp}(N)}N_\alpha$ is the $\bangles{s}$-weight decomposition of the normal bundle $N$ to $\Nn^s$, and that 
$\Oo_{\Nn}\boxtimes\lambda_s^{-1}$ exists in complexified $K$-theory \cite[Cor. 5.11.3]{CG}. It is then clear that $\id\cdot\gamma\otimes\Spec(\gamma)^*\lambda_s=\lambda_{\gamma(s)}$, and hence the same is true for the inverse classes. Here, we again use that the considerations of \eqref{diag lambda transport} apply also the bundles $\Ll_\sigma$, by their construction recalled above.
Therefore this square commutes.
\item
For the fourth square, note that the bivariant Riemann-Roch map 
\[
\mathrm{RR}\colon K_0(\St^{(s,q)})_\Q\overset{\sim}{\to} A_*(\St^{(s,q)})_\Q
\]
given by $\mathrm{RR}(\Ff)=(1\boxtimes\mathrm{Td}(\Nn)\cap\Ch_*(\Ff)$ of \cite[Thm. 5.11.11]{CG} differs from the ``usual" Grothendieck-Riemann-Roch isomorphism $\tau$ of \cite[Cor. 18.3.2]{Fulton} (By Theorem 18.3 (3) of \textit{loc. cit.}, $\tau$ agrees with the definition of Theorem 18.2 of \textit{loc. cit.}) by the invertible Todd class factor $\mathrm{Td}(\Nn)\boxtimes 1$, which is fixed under $\Spec(\gamma)^*$---this follows for instance from \cite[Thm.18.2 (3)]{Fulton} and fact that $\Td(\Nn)=\tau(\Oo_{\Nn})$ (see p.354 of \textit{loc. cit.}). Therefore \cite[Thm.18.2 (3)]{Fulton} applied again to the lci morphism $\Spec(\gamma)$ shows that 
\[
\mathrm{RR}(\Spec(\gamma)^*\Ff)=\Spec(\gamma)^*\mathrm{RR}(\Ff)
\]
for $\Ff\in K_0(\St^{(s,q)})_\Q$; as $\Spec(\gamma)$ is an isomorphism of quasi-projective schemes, its relative Todd class (see B.7.6 of \textit{op. cit.}) is trivial. Therefore the fourth square commutes.
\item
Finally we check the fifth square. Let $\xi$ be an algebraic cycle in $\St^s$, and $v$ a cycle in $\Bb_N^s$. The right-down circuit gives, for $1\otimes 1\otimes\xi$ and $z_1\otimes z_2\otimes v$,
\begin{equation}
\label{act pre}
z_1\gamma(z_2)\otimes \Spec(\gamma)_{\Bb_N^s}^* p_{1*}(\xi\cap p_2^*v),
\end{equation}
where $p_i$ are the respective projections $\St^s\to\Nn^s$ and we view $v$ as a cycle in the smooth, by \cite[Lem. 5.11.1]{CG}, variety $\Nn^s$ (to which Theorem 6.2.4 of \textit{op. cit.} also applies). Now we apply base-change \cite[Prop. 1.7]{Fulton} (see \cite[\S 2.7]{CG} for the corresponding statements for homology throughout) to the diagram
\[
\begin{tikzcd}
\St^{\gamma(s)}\arrow[d, "p_1"]\arrow[r, "\Spec(\gamma)_{\St^s}"]&\St^s\arrow[d, "p_1"]
\\
\Nn^{\gamma(s)}\arrow[r, "\Spec(\gamma)"]&\Nn^s
\end{tikzcd}
\]
noting that $p_1$ is proper and the isomorphism $\Spec(\gamma)$ of schemes is certainly flat. Therefore \eqref{act pre} becomes
\begin{align*}
z_1\gamma(z_2)\otimes p_{1*}\left(\Spec(\gamma)^*_{\St^s}\left(\xi\cap p_{2*}v\right)\right)&=
z_1\gamma(z_2)\otimes p_{1*}\left(\Spec(\gamma)_{\St^s}^*\xi\cap \Spec(\gamma)_{\Nn^s\times\Nn^s}^*p_2^*v\right)
\\
&=
z_1\gamma(z_2)\otimes p_{1*}\left(\Spec(\gamma)_{\St^s}^*\xi\cap p_2^*\Spec_{\Nn^s}(\gamma)^*v\right)
\end{align*}
Here, the first equality used that the intersection pairing commutes, by \cite[Thm. 6.2 (b)]{Fulton}, with pullback by the vertical flat maps in the diagram 
\[
\begin{tikzcd}
\gamma(\xi)
\arrow[r, hook]
\arrow[d, "\Spec(\gamma)_{\St^s}"]
&
\Nn^{\gamma(s)}\times\Nn^{\gamma(s)}
\arrow[d, "\Spec(\gamma)_{\Nn^s\times\Nn^s}"]\\
\xi\arrow[r, hook]&\Nn^s\times\Nn^s.
\end{tikzcd}
\]
The second equality follows from commutativity of the obvious square, viewing $v$ as a cycle in the ambient smooth variety $\Nn^s$.
\end{enumerate}
\end{proof}
The proof of Theorems \ref{thm standard} and \ref{thm Clozel} for inner forms of $\GL_n$ is now complete. 
\qed
\begin{rem}
In general, the action of $\Ll_\sigma$ on $A_*(\Bb_N^s)_\C$ depends only on the conjugacy class of Cartan subgroup of (the Levi quotient of) the disconnected group $Z_G(N)$; see \cite[3.4 (a)]{CellsIV}, or \cite[Lem. 14, \S3.5]{rigid} for this observation in the language of the operator Paley-Wiener theorem for Hecke algebras. In either perspective, either because $Z_{\GL_n}(N)$ is connected, or because the only $I$-spherical discrete series of each Levi subgroup of $\GL_n$ is the tensor product of Steinberg representations, respectively, it follows that the action depends only on $N$ in this case.
\end{rem}

\bibliography{standard_modules_biblio.bib}

\end{document}

%% file: preamble.tex
\newcommand{\diag}{\mathrm{diag}}
\newcommand{\pair}[2]{\left\langle #1 , #2\right\rangle}

\DeclareMathOperator{\Sym}{Sym}

\newcommand{\sets}[2]{\left\{#1\,\middle|\,#2\right\}}

\newcommand{\bangles}[1]{\left\langle #1 \right\rangle}
\newcommand{\sett}[1]{\left\{#1\right\}}

\newcommand{\stab}[2]{\mathrm{Stab}_{#1}(#2)}
\newcommand{\rquotient}{\backslash}

\DeclareMathOperator{\End}{End}

\newcommand{\id}{\mathrm{id}}

\DeclareMathOperator{\Ind}{Ind}

\newcommand{\Rep}{\mathbf{Rep}}
\newcommand{\Mod}{\mathbf{Mod}}

\newcommand{\Coh}{\mathrm{Coh}}

\DeclareMathOperator{\Spec}{Spec}
\DeclareMathOperator{\relSpec}{\underline{Spec}}

\newcommand{\Pp}{\mathbb{P}}

\newcommand{\Bb}{\mathcal{B}}

\newcommand{\pt}{\mathrm{pt}}

\newcommand{\Ff}{\mathcal{F}}

\newcommand{\Ee}{\mathcal{E}}
\newcommand{\Ll}{\mathcal{L}}

\newcommand{\dInt}[2]{\int {#1} \,\mathrm{d} {#2}}

\newcommand{\Z}{\mathbb{Z}}
\newcommand{\C}{\mathbb{C}}
\newcommand{\Q}{\mathbb{Q}}

\newcommand{\Oo}{\mathcal{O}}

\newcommand{\Ad}{\mathrm{Ad}}

\newcommand{\g}{\mathfrak{g}}

\DeclareMathOperator{\Lie}{\mathrm{Lie}}
\newcommand{\Nn}{\tilde{\mathcal{N}}}

\newcommand{\sS}{\mathfrak{s}}

\newcommand{\GL}{\mathrm{GL}}

\newcommand{\SL}{\mathrm{SL}}

\newcommand{\Gm}{{\mathbb{G}_\mathrm{m}}}

\newcommand{\Aut}{\mathrm{Aut}}

\newcommand{\St}{\mathrm{St}}
\newcommand{\triv}{\mathrm{triv}}

\newcommand{\bq}{\mathbf{q}}

\newtheorem{theorem}{Theorem}
\newtheorem*{theorem*}{Theorem}
\newtheorem{cor}{Corollary}
\newtheorem{prop}{Proposition}
\newtheorem{lem}{Lemma}

\theoremstyle{definition}
\newtheorem{dfn}{Definition}
\newtheorem*{dfn*}{Definition}
\theoremstyle{remark}
\newtheorem{ex}{Example}
\newtheorem*{ex*}{Example}
\newtheorem{rem}{Remark}